\newcounter{ENUM}
\newcommand{\be}{\begin{enumerate}}
\newcommand{\ee}{\end{enumerate}}
\newcommand{\beas}{\begin{eqnarray*}}
\newcommand{\eeas}{\end{eqnarray*}}
\newcommand{\bea}{\begin{eqnarray}}
\newcommand{\eea}{\end{eqnarray}}
\newcommand{\beq}{\begin{equation}}
\newcommand{\eeq}{\end{equation}}
\newcommand{\st}{\,:\,}
\newcommand{\ca}{\mathcal{A}}
\newcommand{\cm}{\mathcal{M}}
\newcommand{\cg}{\mathcal{G}}
\newcommand{\fp}{\mathfrak{F}}
\newcommand{\fo}{\mathrm{fo}}
\newcommand{\tq}{\tilde{q}}
\newcommand{\modd}[1]{\,(\mathrm{mod}\, #1)}
\newcommand{\anbc}[2]{\genfrac{\langle}{\rangle}{0pt}{}{#1}{#2}}
\newcommand{\brbc}[2]{\genfrac{[}{]}{0pt}{}{#1}{#2}}
\newtheorem{thm}{Theorem}[section]
\newtheorem{lem}[thm]{Lemma}
\newtheorem{cor}[thm]{Corollary}
\newtheorem{conj}[thm]{Conjecture}
\theoremstyle{definition}
\newtheorem{ex}[thm]{Example}
\theoremstyle{remark}
\numberwithin{equation}{section}
\def\zz{\mathbb{Z}}
\def\nn{\mathbb{N}}
\def\cc{\mathbb{C}}
\newcommand{\qq}{\mathbb{Q}}
\begin{document}
\title{Theorems and Conjectures on Some Rational Generating Functions} 

\date{\today}

\author{Richard P. Stanley}
\email{rstan@math.mit.edu}
\address{Department of Mathematics, University of Miami, Coral Gables,
FL 33124}

\maketitle

\section{Introduction}
This paper arose from my earlier paper \cite{stern}. (See also the
follow-up by Speyer \cite{speyer}.) The prototypical
result in \cite{stern} is the following. Define
  \beq S_n(x) = \prod_{i=0}^{n-1} \left(1+x^{2^i}+x^{2^{i+1}}\right).
     \label{eq:stpr} \eeq
Set $S_n(x)=\sum_{k\geq 0} \anbc nk x^k$ (a finite sum), with
$S_0(x)=1$, and define
  $$ u_2(n) = \sum_{k\geq 0} \anbc{n}{k}^2. $$
Then
  $$ \sum_{n\geq 0} u_2(n)x^n = \frac{1-2x}{1-5x+2x^2}. $$
Upon seeing this result and some similar ones, Doron Zeilberger asked
what happens when $2^n$ is replaced by some other function satisfying
a linear recurrence with constant coefficients, such as the Fibonacci
numbers $F_n$ (with initial conditions $F_1=F_2=1$). We will prove
some results of this nature, but the data suggests that much more is
true. We give a number of conjectures in this direction.


\section{A Fibonacci product} \label{sec:fp}
In this section we consider the product
  \beq I_n(x)=\prod_{i=1}^n \left( 1+x^{F_{i+1}}\right).
  \label{eq:indef} \eeq
In particular, $I_0(x)= 1$ (the empty product) and $I_1(x)=1+x$.
Our main goal for this section is a proof of the following result.

\begin{thm} \label{thm1}
Let $I_n(x)=\sum_{k\geq 0} c_n(k)x^k$, and set
  $$ v_2(n) = \sum_{k\geq 0} c_n(k)^2, $$
so $v_2(0)=1$, $v_2(1)=2$, $v_2(2)=4$, $v_2(3)=10$, etc. Then
  $$ \sum_{n\geq 0} v_2(n)x^n = \frac{1-2x^2}{1-2x-2x^2+2x^3}. $$ 
\end{thm}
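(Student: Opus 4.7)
The plan is to interpret $v_2(n)$ combinatorially, introduce two auxiliary sequences that close up a linear system, and extract from it a scalar recurrence.

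Since
$$v_2(n) = [q^0]\, I_n(q)\, I_n(q^{-1}) = [q^0]\prod_{i=1}^n \bigl(2 + q^{F_{i+1}} + q^{-F_{i+1}}\bigr),$$
we have $v_2(n) = f_n(0)$, where $f_n(m)$ counts the sequences $(\epsilon_1,\ldots,\epsilon_n) \in \{-1,0,0,1\}^n$ (with the value $0$ weighted by $2$) satisfying $\sum_{i=1}^n \epsilon_i F_{i+1} = m$. Conditioning on the last letter $\epsilon_n$ gives the one-step relation
$$f_n(m) = 2 f_{n-1}(m) + f_{n-1}(m - F_{n+1}) + f_{n-1}(m + F_{n+1}),$$
together with the support bound $f_{n-1}(m) = 0$ whenever $|m| > F_{n+2} - 2$.

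The key step is to introduce
$$a_n = f_n(0), \qquad b_n = f_n(F_{n+2}), \qquad c_n = f_n(F_{n+1}).$$
Applying the one-step recurrence at each of $m = 0,\, F_{n+2},\, F_{n+1}$ and using the support bound to annihilate $f_{n-1}(F_{n+2})$, $f_{n-1}(F_{n+3})$, and $f_{n-1}(2F_{n+1})$ produces the closed system
$$a_n = 2a_{n-1} + 2b_{n-1}, \qquad b_n = c_{n-1}, \qquad c_n = a_{n-1} + 2b_{n-1} \quad (n \geq 1),$$
with initial data $a_0 = 1$ and $b_0 = c_0 = 0$. The surviving shifted arguments land back on the tracked set thanks to $F_{n+2} - F_{n+1} = F_n$, which identifies $f_{n-1}(F_n)$ with $c_{n-1}$.

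Eliminating $b_n$ and $c_n$ then yields the scalar recurrence $a_n = 2 a_{n-1} + 2 a_{n-2} - 2 a_{n-3}$ for $n \geq 3$, and the initial values $a_0 = 1$, $a_1 = 2$, $a_2 = 4$ determine
$$(1 - 2x - 2x^2 + 2x^3)\sum_{n \geq 0} v_2(n)\, x^n = 1 - 2x^2.$$
The genuinely delicate step is finding the right tracked targets so that the system closes on the nose: this works because $F_{n+2} + F_{n+1} = F_{n+3}$ and $2F_{n+1}$ both overshoot the support, while $F_{n+2} - F_{n+1} = F_n$ lands on the already-tracked $c_{n-1}$. This ``just-right'' arithmetic is special to the Fibonacci sequence and is presumably the chief obstacle to extending the argument to the more general linear-recurrent exponent sequences treated in the later conjectures.
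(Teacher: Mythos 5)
Your argument is correct, and it is genuinely different from the paper's proof. The paper derives Theorem \ref{thm1} by building the Fibonacci triangle $\mathcal{F}$, classifying each entry by its position (first, middle, last) within its group, and tracking seven quantities $A_1,A_2,A_3,A_{3,1},A_{1,2},A_{1,3},A_{2,3}$ whose one-step evolution is governed by a $7\times 7$ matrix $M$; the characteristic polynomial $x^2(x+1)^2(x^3-2x^2-2x+2)$ then contains spurious factors, and the paper explicitly remarks that this ``suggests that there should be a simpler argument involving a $3\times 3$ matrix rather than a $7\times 7$ matrix.'' Your proof supplies exactly such an argument: writing $v_2(n)=[q^0]\,I_n(q)I_n(q^{-1})$ and tracking the autocorrelations $f_n(0)$, $f_n(F_{n+1})$, $f_n(F_{n+2})$ closes the system in three variables, with the closure resting on the Fibonacci identities $F_{n+2}-F_{n+1}=F_n$ and the overshoots $F_{n+3},\,2F_{n+1}>F_{n+2}-2=\deg I_{n-1}$. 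I verified the three transition equations, the initial data, the elimination to $a_n=2a_{n-1}+2a_{n-2}-2a_{n-3}$, and the resulting numerator $1-2x^2$; all are correct. (The paper also gives a second proof, the case $k=2$, $t=1$ of Theorem \ref{thm:vk2n}, via a free-monoid factorization of pairs of binary words; that route generalizes to the $F^{(k)}$ setting, whereas your sliding-target system is tailored to $k=2$, as you note.) Two small points you should make explicit: the equation $a_n=2a_{n-1}+2b_{n-1}$ uses the symmetry $f_{n-1}(-m)=f_{n-1}(m)$, immediate since each factor $2+q^{F_{i+1}}+q^{-F_{i+1}}$ is invariant under $q\mapsto q^{-1}$; and the support bound as you state it fails vacuously at $n=1$ (where $F_{n+2}-2=-1$ but $f_0(0)=1$), though the three specific vanishings you need there ($f_0(2)=f_0(3)=0$) still hold, so nothing breaks.
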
  

The proof parallels the proofs in \cite{stern} of similar results by
setting up a system of linear recurrences of order one. (In
Section~\ref{sec:genfib} we give another proof, the case $k=2$ and
$t=1$ of Theorem~\ref{thm:vk2n}).  However, deriving these recurrences
here is quite a bit more complicated. It simplifies somewhat the
argument to replace $I_n(x)$ with another power series (with
noninteger exponents). The justification for this replacement is
provided by the following lemma. Part (b) is presumably known, though
we couldn't find this result in the literature.

\begin{lem} \label{lem1}
  Let $\phi=\frac 12(1+\sqrt{5})$.
  \be\item[(a)] Suppose $\alpha=(a_0,a_1,\dots)$ and
  $\beta=(b_0,b_1,\dots)$ are 
  sequences of 0's and 1's, with finitely many 1's, such that
    $$ \sum_{i\geq 0} a_i\phi^i =\sum_{i\geq 0} b_i\phi^i. $$
  Then $\alpha$ can be converted to $\beta$ by a sequence of
  operations that replace three consecutive terms 001 with 110, and
  vice versa. 
   \item[(b)] Suppose $\alpha=(a_0,a_1,\dots)$ and
  $\beta=(b_0,b_1,\dots)$ are 
  sequences of 0's and 1's, with finitely many 1's, such that
    $$ \sum_{i\geq 0} a_iF_{i+2} =\sum_{i\geq 0} b_iF_{i+2}. $$
  Then $\alpha$ can be converted to $\beta$ by a sequence of
  operations that replace three consecutive terms 001 with 110, and
  vice versa. 
  \ee
\end{lem}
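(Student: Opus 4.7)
The plan is to prove both parts uniformly by reducing each input sequence, via the rule $110 \to 001$, to a canonical ``Zeckendorf-type'' representation with no two consecutive $1$'s, and then invoking a uniqueness statement. The common thread is that both weight sequences, $w_i = \phi^i$ in part~(a) and $w_i = F_{i+2}$ in part~(b), satisfy the Fibonacci recurrence $w_{i+2} = w_{i+1} + w_i$ (for $\phi$ this is the defining identity $\phi^2 = \phi+1$). Hence either rewriting $001 \leftrightarrow 110$ preserves $\sum_i a_i w_i$, so the moves are value-preserving, and the task reduces to showing they generate the full equivalence relation on representations of a common value.

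First I would establish that any $0/1$-sequence with finitely many $1$'s reduces, under iteration of $110 \to 001$, to a sequence with no two consecutive $1$'s. Each application strictly decreases the number of $1$'s (two become one), so the process terminates. At termination no substring $110$ occurs; and if consecutive $1$'s still appeared, then taking the rightmost pair $(j,j+1)$ forces $a_{j+2}=0$ (else $(j+1,j+2)$ would be a later pair), producing a forbidden $110$.

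Next I would prove that the terminal ``no-$11$'' sequence is determined by $\sum_i a_i w_i$. For part~(b) this is the classical Zeckendorf theorem. For part~(a) the following direct argument works: suppose $\alpha \neq \beta$ are both Zeckendorf-type with $\sum a_i \phi^i = \sum b_i \phi^i$, let $N$ be the largest index of disagreement, and say $a_N = 1,\ b_N = 0$. Then $\phi^N = \sum_{i<N}(b_i - a_i)\phi^i \leq \sum_{i<N} b_i \phi^i$. Since $\beta$ has no consecutive $1$'s and $b_N = 0$, the sum $\sum_{i<N} b_i \phi^i$ is at most $\phi^{N-1} + \phi^{N-3} + \phi^{N-5} + \cdots$; this geometric series equals $\phi^{N-1}/(1 - \phi^{-2}) = \phi^{N+1}/(\phi^2-1) = \phi^N$ using $\phi^2-1 = \phi$. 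Because $\beta$ has only finitely many $1$'s, the actual sum is strictly less than $\phi^N$, contradicting $\phi^N \leq \sum_{i<N} b_i \phi^i$.

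With uniqueness in hand, both $\alpha$ and $\beta$ reduce to the same canonical sequence; since $001 \leftrightarrow 110$ is reversible, one can follow $\alpha$'s reduction down and then $\beta$'s reduction back up to connect them. The main obstacle I anticipate is the uniqueness argument in part~(a), since part~(b) follows from a classical theorem whereas the $\phi$-analogue requires the specific geometric-series identity above; once that is in place, the rest of the argument is essentially the termination/confluence bookkeeping sketched above.
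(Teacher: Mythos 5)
Your argument is correct, and it is genuinely different from (and considerably more complete than) what the paper offers: the paper disposes of (a) in one line by appealing to the irreducibility of $x^2-x-1$ (which really only shows that the difference polynomial $\sum(a_i-b_i)x^i$ is a $\mathbb{Z}[x]$-multiple of $x^2-x-1$, leaving unaddressed why that multiple can be realized by moves that keep all coefficients in $\{0,1\}$), and disposes of (b) by asserting an induction on the largest index with a nonzero entry, details omitted. Your route instead treats $110\to 001$ as a terminating rewriting system, identifies the normal forms as the no-consecutive-ones sequences, and proves that the normal form is determined by the value $\sum a_iw_i$ --- via Zeckendorf for (b) and via the geometric-series bound $\sum_{i<N}b_i\phi^i\le \phi^{N-1}+\phi^{N-3}+\cdots=\phi^N$ for (a) --- after which reversibility of the moves connects $\alpha$ to $\beta$ through the common normal form. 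This buys a uniform, self-contained proof of both parts and makes explicit the confluence that the paper takes for granted. Two small points to tighten: the bound $\sum_{i<N}b_i\phi^i\le\phi^{N-1}+\phi^{N-3}+\cdots$ for a no-consecutive-ones sequence deserves its one-line induction; and the strict inequality at the end comes not from $\beta$ having finitely many $1$'s but from the fact that the sum is truncated at index $0$, i.e., it omits the positive tail $\phi^{-1}+\phi^{-3}+\cdots$ (or $\phi^{-2}+\phi^{-4}+\cdots$) of the geometric series whose full value is $\phi^N$. Neither is a gap in substance.
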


\textbf{Proof.} (a) This is a simple consequence of the fact that
$\phi$ is a zero of the irreducible polynomial $x^2-x-1$.

(b) Simple proof by induction on the largest $j$ for which $a_j=1$ or
$b_j=1$. Details omitted. $\ \ \Box$

For a power series $P(x)=\sum_{i\geq 0} c_i x^{m_i}$ with real
exponents $m_i\geq 0$, where each $c_i\neq 0$ and $m_0<m_1<\cdots$, we
call the sequence $(c_0,c_1,\dots)$ the \emph{sequence of
  coefficients} of $P(x)$.  It's easy to see that Lemma~\ref{lem1} has
the following consequence.

\begin{cor}
Let $G_n(x) = \prod_{i=0}^{n-1} \left( 1+x^{\phi^i}\right)$, a
``formal polynomial'' whose exponents lie in the ring
$\zz[\phi]$. Then the sequence of coefficients of $G_n(x)$ is equal to
the sequence of coefficients of $I_n(x)$. Moreover, if the coefficient
of $x^k$ in $I_n(x)$ is 0, then $k>\deg I_n(x)$.
\end{cor}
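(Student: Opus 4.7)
The plan is to expand both products as sums indexed by characteristic vectors in $\{0,1\}^n$ and then reduce everything to Lemma~\ref{lem1}. For $\alpha=(a_0,\dots,a_{n-1})\in\{0,1\}^n$ define
$$ e_\phi(\alpha)=\sum_{i=0}^{n-1}a_i\phi^i,\qquad e_F(\alpha)=\sum_{i=0}^{n-1}a_iF_{i+2}. $$
After reindexing $i\mapsto i-1$ in the definition of $I_n(x)$, these satisfy $G_n(x)=\sum_\alpha x^{e_\phi(\alpha)}$ and $I_n(x)=\sum_\alpha x^{e_F(\alpha)}$. Two vectors $\alpha,\beta$ merge into the same monomial of $G_n(x)$ iff $e_\phi(\alpha)=e_\phi(\beta)$; regarding them as infinite sequences padded by zeros, Lemma~\ref{lem1}(a) translates this into the condition that $\alpha$ and $\beta$ are related by a sequence of $001\leftrightarrow 110$ swaps. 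Lemma~\ref{lem1}(b) says the same swap relation characterizes $e_F(\alpha)=e_F(\beta)$. Therefore $e_\phi$ and $e_F$ have the same fibers on $\{0,1\}^n$, and the multisets of coefficients of $G_n(x)$ and $I_n(x)$ coincide.

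To upgrade ``same multiset'' to ``same sequence'', I would show that $e_\phi$ and $e_F$ induce the same linear order on equivalence classes. In each class I pick the canonical Zeckendorf representative (the unique one with no two consecutive $1$'s, reached by iterating $110\to 001$), and proceed by induction on the largest index $k$ carrying a $1$. The key estimate is that a Zeckendorf expansion with largest index $k$ lies in $[\phi^k,\phi^{k+1})$ in the $\phi$-case---since $\sum_{j\ge 0}\phi^{k-2j}=\phi^{k+1}$ thanks to $\phi^2=\phi+1$---and in $[F_{k+2},F_{k+3})$ in the Fibonacci case, since $\sum_{j\ge 0}F_{k+2-2j}<F_{k+3}$. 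Thus the integer $k$ is determined by the class, independently of which exponent function is used, and the induction on $k$ reduces the order comparison to one of strictly smaller type.

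For the no-gap statement I would induct on $n$. The base cases $n=0,1$ are immediate. Writing $I_n(x)=I_{n-1}(x)\bigl(1+x^{F_{n+1}}\bigr)$ and using $F_{n+1}+F_{n+2}=F_{n+3}$, the inductive hypothesis that the support of $I_{n-1}$ is $[0,F_{n+2}-2]$ gives that the support of $I_n$ is $[0,F_{n+2}-2]\cup[F_{n+1},F_{n+3}-2]=[0,F_{n+3}-2]$; the two intervals overlap because $F_{n+1}\le F_{n+2}-1$ for $n\ge 1$. The main obstacle in the whole argument is the ordering step: multiset equality follows essentially for free from Lemma~\ref{lem1}, whereas certifying that $e_\phi$ and $e_F$ agree as linear orders on equivalence classes requires the interval estimate above, which is standard but must be handled carefully when Zeckendorf normalization pushes some $1$'s outside the original range $\{0,\dots,n-1\}$.
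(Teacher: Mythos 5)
Your argument is correct and follows the route the paper intends: the paper states this corollary as an ``easy to see'' consequence of Lemma~\ref{lem1}, and your proof is exactly that derivation with the omitted details supplied (the fiber/multiset step from parts (a) and (b), the compatibility of the two exponent orderings via the Zeckendorf interval estimates, and the induction on $n$ for the no-gap claim). I see no gaps.
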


To illustrate the next result, when we expand $G_5(x)$ we obtain the
following expression, where the terms are listed in increasing order
of their exponents:
  $$ G_5(x) = 1+x+x^a+2x^{a+1}+x^{a+2}+2x^b+2x^{b+1}+x^c+3x^{c+1}+
   2x^{c+2} $$
\vspace{-1em}
  $$ \qquad\qquad +2x^d+3x^{d+1}+x^{d+2} +2x^e+2x^{e+1}+x^f+2x^{f+1}
  +x^{f+2}+x^g+x^{g+1}, $$
for certain numbers $a,b,\dots,g\in\zz[\phi]$. Note that the terms come
in groups (or \emph{strings}) of length two or three, where within
each string the 
exponents increase by one at each step.

\begin{thm}
For $n\geq 1$, we can write $G_n(x)$ as a sum $G_n(x) = T_1(x)+ T_2(x)
+\cdots+ T_k(x)$, where each $T_i(x)$ has the form $c_1x^h+c_2x^{h+1}$ or
$c_1x^h+c_2x^{h+1}+c_3x^{h+2}$ for some positive integers
$c_1,c_2,c_3$.. Moreover, the largest exponent 
of a term in $T_i(x)$ is less than the smallest exponent of a term in
$T_{i+1}(x)$. (As an aside, we have $k=F_{n+1}$.)  
\end{thm}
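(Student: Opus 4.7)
The plan is to prove the theorem by induction on $n$, driven by the factorization
\[G_n(x) = (1+x)\, G_{n-1}(x^\phi),\]
which is immediate from the product definition. Writing each exponent of $G_n$ as $p + q\phi$ with $p, q \in \zz_{\geq 0}$ (using $\phi^2 = \phi + 1$) and identifying it with the lattice point $(p, q)$, the substitution $x \mapsto x^\phi$ multiplies exponents by $\phi$, sending $(p, q)$ to $(q, p+q)$; multiplication by $(1+x)$ then adds a copy shifted by $(+1, 0)$. So each lattice point $(p, q)$ in the exponent set $E_{n-1}$ of $G_{n-1}$ produces exactly two adjacent lattice points $(q, p+q)$ and $(q+1, p+q)$ in $E_n$, both at the same height $h = p+q$.

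A group of $G_n$ in the sense of the theorem is a maximal horizontal run in the lattice picture, i.e. a maximal set of exponents sharing a common second coordinate. The group at height $h$ in $E_n$ gathers contributions from all $(p, q) \in E_{n-1}$ with $p + q = h$, that is from the antidiagonal of $E_{n-1}$ indexed by $h$. If that antidiagonal contains $m$ points, the resulting group is the union of $m$ adjacent $p$-pairs, hence an interval of $p$-values of length $m+1$. So the theorem reduces to two claims about $E_{n-1}$: every antidiagonal has size $1$ or $2$ (yielding groups of size $2$ or $3$), and the nonempty antidiagonals are exactly $\{0, 1, \ldots, F_{n+1}-1\}$ (yielding $k = F_{n+1}$ groups).

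For the induction I would maintain the following strengthened hypothesis: for each $n$, at each row $q \in \{0, \ldots, F_{n+1}-1\}$ the set $N_n(q) := \{p : (p, q) \in E_n\}$ is an interval $[a_n(q), b_n(q)]$ of length $2$ or $3$; the staircase relation $a_n(q+1) = b_n(q) - 1$ holds; and $a_n(0) = 0$, $b_n(F_{n+1}-1) = F_n$. This staircase immediately implies the two antidiagonal claims: each antidiagonal of $E_n$ is either an interior point of a single row (size $1$) or a crossover pair $(b_n(q), q),\, (a_n(q+1), q+1)$ on the boundary between consecutive rows (size $2$, since they have equal $p+q$ by the staircase relation), and the antidiagonal values fill the interval $[0,\, F_n + F_{n+1} - 1] = [0,\, F_{n+2}-1]$.

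The main obstacle is propagating the staircase invariant through the inductive step. By the recursion, an antidiagonal of $E_{n-1}$ of size $m$ maps to a single row of $E_n$ of length $m+1$, so the row-length assertion follows directly from the antidiagonal invariant on $E_{n-1}$. The delicate part is verifying the row-to-row staircase relation in $E_n$: one must show that two adjacent antidiagonals of $E_{n-1}$ (with sums differing by $1$) glue, via the map $(p,q) \mapsto \{(q, p+q),\, (q+1, p+q)\}$, to produce two adjacent rows of $E_n$ satisfying $a_n(q+1) = b_n(q) - 1$. This reduces to a case analysis on the crossover structure of $E_{n-1}$, in particular on how a size-$2$ antidiagonal straddles two rows, and it is here that the bulk of the combinatorial bookkeeping lies.
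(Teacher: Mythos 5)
Your route is genuinely different from the one hinted at in the paper. The paper's proof hint works locally: it normalizes each exponent $\sum a_i\phi^i$ via Lemma~\ref{lem1}(a) (the relation $\phi^2=\phi+1$) and exhibits the groups explicitly as the exponent sets $\{\phi+\tau,\ \phi^2+\tau,\ 1+\phi^2+\tau\}$ and $\{\phi+\phi^2+\tau,\ 1+\phi+\phi^2+\tau\}$ for a common tail $\tau=a_3\phi^3+a_4\phi^4+\cdots$. You instead run a global induction on the lattice picture $p+q\phi\leftrightarrow(p,q)$ through the factorization $G_n(x)=(1+x)G_{n-1}(x^\phi)$. This buys something real: the group structure, the ordering claim, and the count $k=F_{n+1}$ all fall out of a single staircase invariant, and you avoid having to argue that the paper's list of prefix types is exhaustive. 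The setup is sound --- positivity of the coefficients means there is no cancellation, so the exponent set of $G_n$ really is the image of $E_{n-1}$ under $(p,q)\mapsto\{(q,p+q),(q+1,p+q)\}$ --- but you should make one point explicit: the staircase $a_n(q+1)=b_n(q)-1$ gives the ordering assertion of the theorem precisely because the gap between the top of row $q$ and the bottom of row $q+1$ is $\phi-1$, which is positive and not an integer; that is what makes the rows maximal integer runs listed in increasing order.

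The one genuine deficiency is that the proposal stops exactly where the proof is: you defer the propagation of the staircase to an unspecified ``case analysis.'' That step must be carried out, but it is short, so the plan does close up. From the hypothesis on $E_{n-1}$, row $q$ occupies the antidiagonal sums $[a(q)+q,\,b(q)+q]$, and row $q+1$ starts at $a(q+1)+(q+1)=b(q)+q$; so consecutive rows share exactly one sum, while rows $q$ and $q+2$ share none (using $a(q+1)\le b(q+1)-1$, i.e.\ each row has at least two points). Hence every antidiagonal $h\in[0,F_{n+1}-1]$ of $E_{n-1}$ is nonempty of size $1$ or $2$, and a size-$2$ antidiagonal is $\{(b(q),q),\,(b(q)-1,q+1)\}$, which lies in consecutive rows as required. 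For the staircase in $E_n$ there are then only two cases: if antidiagonal $h$ is a single point $(h-q,q)$ with $h-q<b(q)$, then antidiagonal $h+1$ still meets row $q$ first, so $a_n(h+1)=q=b_n(h)-1$; if $h=b(q)+q$ with row $q+1$ present, then $b_n(h)=q+2$ and antidiagonal $h+1$ first meets row $q+1$, so $a_n(h+1)=q+1=b_n(h)-1$. Together with the base case $E_1=\{(0,0),(1,0)\}$ and the boundary values $a_n(0)=0$, $b_n(F_{n+1}-1)=F_n$, this completes the induction; with that paragraph added your argument is a full proof.
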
  

\textbf{Proof hint.} The terms $T_i(x)$ with two summands are of the
form $1+x$ or
  $$ c_1x^{\phi+\phi^2+a_3\phi^3+a_4\phi^4+\cdots}+
    c_2x^{1+\phi+\phi^2+a_3\phi^3+a_4\phi^4+\cdots}, $$
where $a_3,a_4,\dots$ is a sequence of 0's and 1's with finitely many
1's, and where $c_1,c_2$ are positive integers. Similarly, the terms
$T_i(x)$ with three summands are of the form 
   $$ c_1x^{\phi+a_3\phi^3+a_4\phi^4+\cdots}+
          c_2x^{\phi^2+a_3\phi^3+a_4\phi^4+\cdots}+
         c_3x^{1+\phi^2+a_3\phi^3+a_4\phi^4+\cdots}.\ \ \Box $$

\textsc{Note.} Though we have no need of this result, let us mention
that if $d_n(i)$ denotes the number of terms (either two or three) of
$T_i(x)$ (coming from $G_n(x)$), then $d_n(i)=d_n(F_{n+1}-i+1)$ and 
  $$ d_n(i) = 1+\lfloor i\phi\rfloor -\lfloor (i-1)\phi\rfloor,\ \
  1\leq i \leq \left\lceil \frac 12F_{n+1}\right\rceil. $$
Set $d(i)=\lim_{n\to\infty}d_n(i)$. The sequence $(d(1),d(2),\dots)$
is obtained from sequence A014675 in OEIS by prepending a 1 and adding
1 to every term. 

We now define an array analogous to Pascal's triangle (or the
arithmetic triangle) and Stern' triangle of \cite{stern}. We call the
resulting array the \emph{Fibonacci triangle} $\mathcal{F}$. (This
definition is unrelated to some other definitions of Fibonacci
triangle in the literature.)

Every row is a sequence of positive integers, together with a grouping
of consecutive terms such that every string of the grouping has two or
three terms. We will denote the grouping by a bullet ($\bullet$)
between stringss. The first row is the sequence $1,1$, which necessarily
has a single element $1,1$ in its grouping.  Regard the first entry in
each row as preceded by a 0.  Similarly, the last entry in each row is
followed by a 0. 

Row $i+1$ is obtained from row $i$ by the following recursive
procedure. If a term $a_j$ of row $i$ ends a string (so $a_{j+1}$
begins a string), then below $a_j,a_{j+1}$ write in row $i+1$ the
3-element string $a_j,a_j+a_{j+1},a_{j+1}$. If $a_j$ in row $i$ is the
middle element of a 3-element string, then write in row $i+1$ below
$a_j$ the 2-element string $a_j,a_j$.

Note that according to this procedure, odd numbered rows will begin with
a 2-element string $1,1$ (preceded by a 0) and end with a 2-element
string $1,1$ (followed by a 0). On the other hand, even numbered rows
will begin with the 3-element string $0,1,1$ and end with the 3-element
string $1,1,0$. In all instances, entries equal to 0 are not regarded
as actual entries of $\mathcal{F}$.

The first five rows of $\mathcal{F}$ look as follows:

{\small
  $$\hspace{-2em} \begin{array}{ccccccccccccccccccccccccccccc}
    & & & & & & & & & & & 1 & & & & & & 1\\
    & & & & & 1 & & & & & & 1 & & & \bullet & & & 1 & & & & & & 1\\
    & & & 1 & & 1 & & & \bullet & & & 1 & & & 2 & & & 1 & & & \bullet
      & & & 1 & & 1\\
    & 1 & & 1 & \bullet & 1 & & & 2 & & & 1 & \bullet & 2 & & 2 &
      \bullet & 1 & & & 2 & & & 1 & \bullet & 1 & & 1\\
  1 & 1 & \bullet & 1 & 2 & 1 & \bullet & 2 & & 2 & \bullet & 1 & 3 &
     2 & 
    \bullet & 2 & 3 & 1 & \bullet & 2 & & 2 & \bullet & 1 & 2 & 1
    & \bullet & 1 & 1\\
  \end{array} $$
}
  
Let $\brbc nk$ be the $k$th entry (beginning with $k=0$) in row
$n$ (beginning with $n=1$) of $\mathcal{F}$. Set $H_n(x) =
\sum_{k\geq 0} \brbc nk x^k$. For instance,
  $$ H_3(x) = 1+x+x^2+2x^3+x^4+x^5+x^6. $$
The following result can be proved by induction.

\begin{thm} \label{thm:hnfn}
We have $H_n(x)=I_n(x)$.
\end{thm}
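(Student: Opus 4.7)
I will proceed by induction on $n$, strengthening the hypothesis to track the bullet structure. Let $P(n)$ assert: (i) $H_n(x) = I_n(x)$; and (ii) the bullet grouping of row $n$, read left to right, coincides with the canonical decomposition $I_n(x) = T^{(n)}_1(x) + \cdots + T^{(n)}_{F_{n+1}}(x)$ of the preceding theorem (transferred from $G_n$ to $I_n$ via the Corollary), with the $g$-th bullet group equal to the coefficient sequence of $T^{(n)}_g$. The base case $n=1$ is immediate: row $1$ is the single 2-group $(1,1)$, so $H_1(x) = 1 + x = I_1(x) = T^{(1)}_1(x)$.

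For the inductive step, write
\[
I_{n+1}(x) \;=\; I_n(x) + x^{F_{n+2}}\,I_n(x) \;=\; \sum_{g=1}^{F_{n+1}} T^{(n)}_g(x) \;+\; \sum_{g=1}^{F_{n+1}} x^{F_{n+2}}\,T^{(n)}_g(x),
\]
and regroup the $2F_{n+1}$ pieces by exponent. Using $F_{n+2} = F_{n+1} + F_n$, the $T^{(n)}$'s and their shifted copies interleave into three consecutive blocks: (a) the first $F_n$ groups of $I_{n+1}$ coincide with the first $F_n$ of $I_n$, since the smallest shifted exponent $F_{n+2}$ lies at or past the end of the support of $T^{(n)}_{F_n}$; (b) the next $F_{n-1}$ groups are merges $T^{(n)}_{F_n+i} + x^{F_{n+2}} T^{(n)}_i$ for $1 \le i \le F_{n-1}$, which reassemble into 2- or 3-term polynomials with consecutive exponents; (c) the last $F_n$ groups are the shifted copies $x^{F_{n+2}} T^{(n)}_{F_{n-1}+j}$ for $1 \le j \le F_n$, which lie past the end of $I_n$'s support. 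This gives $F_n + F_{n-1} + F_n = F_{n+2}$ groups. One then verifies term-by-term that the bullet recursion on row $n$ produces exactly these groups: the start boundary yields $T^{(n+1)}_1 = (1,1)$; each interior boundary $(a_j, a_{j+1})$ yields a 3-group $(a_j, a_j+a_{j+1}, a_{j+1})$ that is identifiable with a group in block (a), (b), or (c) according to its position; each middle-of-3-group element $a_j$ yields a 2-group $(a_j, a_j)$ similarly classified; and the end boundary yields $T^{(n+1)}_{F_{n+2}} = (1,1)$.

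The principal obstacle is the combinatorial bookkeeping in block (b): the shifted copy $x^{F_{n+2}} T^{(n)}_i$ may align exactly with the start of $T^{(n)}_{F_n + i}$, or be offset by one, depending on the parity of $n$ and on $i$ (already the small cases $n = 2, 4$ exhibit offset-$1$ behavior). In both cases the merge must yield a polynomial with consecutive exponents of the right length to match the bullet output $(a, a+b, b)$, and proving this uniformly requires the exponent identities from the proof hint of the preceding theorem --- which express each $T^{(n)}_g$'s exponents as $\phi$-expansions with $0,1$ digits --- together with Lemma~\ref{lem1}(b), which supplies the Fibonacci arithmetic needed to resolve the offsets.
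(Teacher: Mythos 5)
The paper offers no argument for this theorem beyond the single sentence that it ``can be proved by induction,'' so there is nothing detailed to compare against; your induction is surely the intended method, and your three-block analysis of $I_{n+1}(x)=I_n(x)+x^{F_{n+2}}I_n(x)$ is correct as far as it goes. I checked it against rows $4$, $5$, and $6$ of $\mathcal{F}$: blocks (a), (b), (c) do reproduce exactly the bullet groups, block (b) exhibits both the offset-$0$ and offset-$1$ alignments you describe, and the group count $F_n+F_{n-1}+F_n=F_{n+2}$ is right.

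The problem is that what you label ``the principal obstacle'' is the entire content of the theorem, and you defer it rather than prove it. Two claims are asserted without argument. First, the alignment claim: that $x^{F_{n+2}}T^{(n)}_i$ overlaps $T^{(n)}_{F_n+i}$ and no other group, with the union of supports an interval of length $2$ or $3$; the offset must be pinned down for every $i$ and $n$, not observed in small cases. Second, and more importantly, the value-matching: your outline tracks positions and group sizes, but to conclude $H_{n+1}=I_{n+1}$ you must show that the coefficients of each merged or copied block equal the numbers $(a_j,\,a_j+a_{j+1},\,a_{j+1})$ or $(a_j,a_j)$ that the bullet recursion writes below the corresponding boundary or middle of row $n$, and this correspondence between features of row $n$ and your blocks is never set up. Your hypothesis (ii) also anchors the induction to the canonical decomposition of the preceding theorem, whose proof the paper itself only hints at, so you inherit that debt: carrying out the alignment claim requires the explicit $\phi$-expansions of the group exponents. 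A more self-contained route is to drop the $T_i$ machinery and instead strengthen the induction to two elementary facts proved simultaneously: that consecutive rows of $\mathcal{F}$ agree, entries and grouping alike, on an initial segment of length $F_{n+2}$ (this is local, so it propagates directly through the bullet recursion), and that each row is a palindrome with reversed grouping (as $I_n$ is a product of palindromic factors). These two facts dispose of blocks (a) and (c) at once and isolate the genuine work in the middle $F_{n-1}$ groups, where the overlap is controlled by $\deg I_n=F_{n+3}-2$.
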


We now have all the ingredients for proving Theorem~\ref{thm1}. 
Define
   \beas \brbc nk_1 & = & \left\{ \begin{array}{rl}
       \brbc nk, & \mbox{if the $k$th entry in row $n$
       of $\mathcal{F}$ is the first entry of its
       string}\\[.5em] 0, & \mbox{otherwise} \end{array}
       \right.\\
    \brbc nk_2 & = & \left\{ \begin{array}{rl}
       \brbc nk, & \mbox{if the $k$th entry in row $n$
       of $\mathcal{F}$ is the middle entry of its
       string}\\[.5em] 0, & \mbox{otherwise} \end{array}
       \right.\\
    \brbc nk_3 & = & \left\{ \begin{array}{rl}
       \brbc nk, & \mbox{if the $k$th entry in row $n$
       of $\mathcal{F}$ is the last entry of its
       string}\\[.5em] 0, & \mbox{otherwise.} \end{array}
       \right. \eeas
Set
 \beas A_1(n) & = & \sum_k \brbc nk_1^2\\       
      A_2(n) & = & \sum_k \brbc nk_2^2\\       
      A_3(n) & = & \sum_k \brbc nk_3^2\\       
      A_{3,1}(n) & = & \sum_k \brbc nk_3 \brbc{n}{k+1}_1\\       
      A_{1,2}(n) & = & \sum_k \brbc nk_1 \brbc{n}{k+1}_2\\       
      A_{1,3}(n) & = & \sum_k \brbc nk_1 \brbc{n}{k+1}_3\\       
      A_{2,3}(n) & = & \sum_k \brbc nk_2 \brbc{n}{k+1}_3. \eeas
Using the definition of $\mathcal{F}$ one checks the following (all
sums are over $k\geq 0$):
   \beas A_1(n+1) & = & \sum \left( \brbc nk_3^2 + \brbc
   nk_2^2\right)\\ & = & A_2(n) + A_3(n)\\[.5em]  
   A_2(n+1) & = & \sum \left( \brbc nk_3 + \brbc{n}{k+1}_1\right)^2\\
   & = & A_1(n) + A_3(n)+2A_{3,1}(n)\\[.5em]
   A_3(n+1) & = & \sum \left( \brbc nk_1^2 + \brbc
   nk_2^2\right)\\ & = & A_1(n) + A_2(n)\\[.5em]
   A_{3,1}(n+1) & = & \sum \left(\brbc nk_1\brbc{n}{k+1}_2+
   \brbc nk_1\brbc{n}{k+1}_3+\brbc nk_2\brbc{n}{k+1}_3\right)\\ & = &
    A_{1,2}(n)+A_{1,3}(n)+A_{2,3}(n) \eeas

  \beas  A_{1,2}(n+1) & = & \sum \brbc nk_3\left( \brbc{n}{k}_3+
    \brbc{n}{k+1}_1\right)\\ & = & A_3(n)+A_{3,1}(n)\\[.5em]
   A_{1,3}(n+1) & = & \sum \brbc nk_2^2\\ & = & A_2(n)\\[.5em]
   A_{2,3}(n+1) & = & \sum \left( \brbc nk_3+\brbc{n}{k+1}_1\right)
   \brbc{n}{k+1}_1\\ & = & A_1(n) + A_{3,1}(n). \eeas
Let $M$ denote the matrix
 $$ M=\left[ \begin{array}{ccccccc} 0 & 1 & 1 & 0 & 0 & 0 & 0\\
    1 & 0 & 1 & 2 & 0 & 0 & 0\\ 1 & 1 & 0 & 0 & 0 & 0 & 0\\
    0 & 0 & 0 & 0 & 1 & 1 & 1\\ 0 & 0 & 1 & 1 & 0 & 0 & 0\\
    0 & 1 & 0 & 0 & 0 & 0 & 0\\ 1 & 0 & 0 & 1 & 0 & 0 & 0
    \end{array}\right], $$
and let $v(n)$ denote the column vector
  $$ v(n)=[A_1(n),A_2(n),A_3(n),
  A_{3,1}(n), A_{1,2}(n), A_{1,3}(n), A_{2,3}(n)]^t $$
(where $^t$ denotes transpose). The recurrences above take the form
${v(n+1) = Mv(n)}$. Hence, as in \cite[{\S}2]{stern}, the seven functions
$A_\alpha(n)$ all satisfy a linear recurrence relation whose characteristic
polynomial $Q_2(x)$ is the characteristic polynomial $\det(xI-M)$ of
$M$. Then $\sum_{n\geq 0} A_\alpha(n)x^n$ is a rational function with
denominator $x^{\deg Q_2(x)}Q_2(1/x)$.  One computes
$Q_2(x)=x^2(x+1)^2(x^3-2x^2-2x+2)$. Taking into account the initial
conditions for the case $A_1(n)+A_2(n)+A_3(n)=v_2(n)$ yields
Theorem~\ref{thm1}. 

Note that the factors $x^2(x+1)^2$ of $Q_2(x)$ were spurious. This
suggests that there should be a simpler argument involving a $3\times
3$ matrix rather than a $7\times 7$ matrix.  

\section{A Fibonacci triangle poset}
This section assumes a basic knowledge of the combinatorics of
partially ordered sets (posets) and symmetric functions such as that
appearing in \cite[Ch.~3]{ec1} and \cite[Ch.~3]{ec2}. It is unrelated
to the rest of this paper.

There is a poset $\fp$ that is naturally associated with the Fibonacci
triangle $\mathcal{F}$. The elements $t_{nk}$ of $\fp$ correspond to
the entries $\brbc{n}{k}$ of $\mathcal{F}$, with a bottom element
$\hat{0}$ adjoined. The element $t_{n+1,k}$ covers $t_{nj}$ if the
recurrence defining $t_{n+1,k}$ involves $t_{nj}$. Thus every element
of $\fp$ is covered by exactly two elements. The number of saturated
chains from $\hat{0}$ to $t_{nk}$ is $\brbc nk$. See
Figure~\ref{fig:fp}, which is drawn upside-down (as a poset) in order
to agree with the way the Fibonacci triangle $\mathcal{F}$ is
drawn. We call $\fp$ the \emph{Fibonacci triangle poset} or
\emph{FT-poset}. For another representation of this poset (considered
as a ``hyperbolic graph''), see Northshield
\cite[Fig.~4]{north}. (There is already a poset called the
\emph{Fibonacci poset} \cite{rs:fp}. For a further poset associated
with Fibonacci numbers see \cite[{\S}5]{rs:dp}.)

  \begin{figure}
    \centering
  \centerline{\includegraphics[width=12cm]{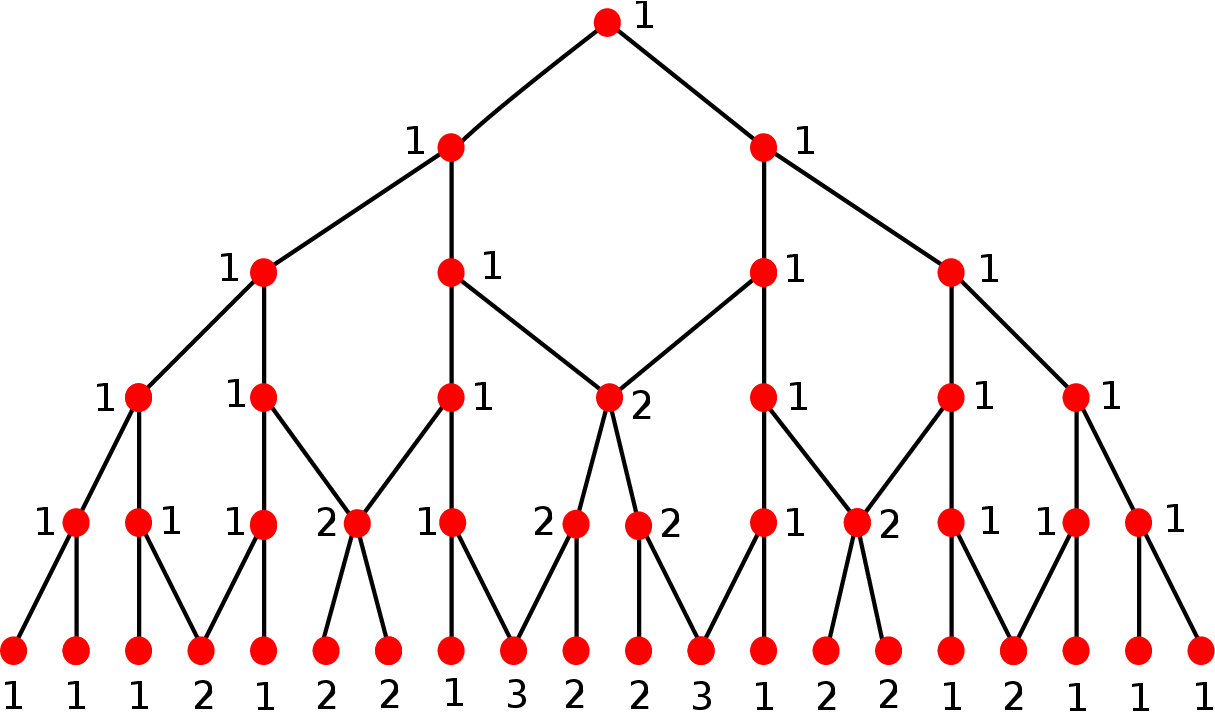}}
  \caption{The Fibonacci triangle poset $\fp$}
  \label{fig:fp}
  \end{figure}

The grouping of the elements of row $n$ of the Fibonacci triangle into
strings of size two and three is readily seen from the
FT-poset. Consider the subposet consisting of ranks $n-1$ and $n$
(where the bottom element $\hat{0}$ has rank 0). The connected
components of this subposet define the grouping.

Suppose that we label the edges of $\fp$ as follows. The edges between
ranks $2k$ and $2k+1$ are labelled alternately
$0,F_{2k+2},0,F_{2k+2},\dots$ from left to right. The edges between
ranks $2k-1$ and $2k$ are labelled alternately
$F_{2k+1},0,F_{2k+1},0,\dots$ from left to right. See
Figure~\ref{fig:fiblabel}. Then it is not difficult to show that if
$t\in \fp$ and rank$(t)=n$, then the edge labels of all
saturated chains from $\hat{0}$ to $t$ have the same sum $\sigma(t)$,
and that these chains correspond to all ways to write $\sigma(t)$ as a
sum of the elements of a subset of $\{F_2,F_3,\dots,F_{n+1}\}$. 

  \begin{figure}
    \centering
    \centerline{\includegraphics[width=10cm]{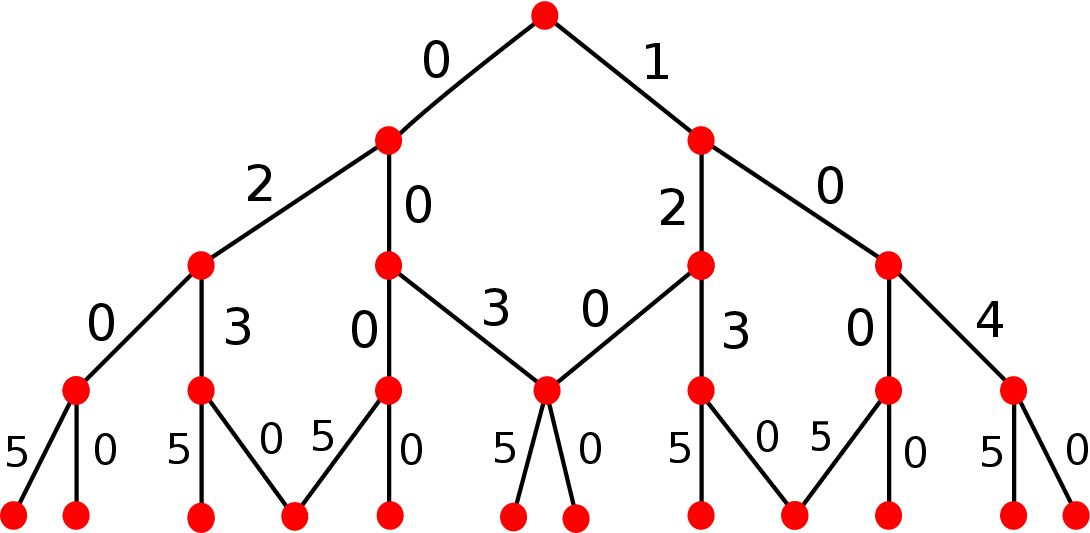}}
  \caption{An edge labeling of $\fp$}
  \label{fig:fiblabel}
  \end{figure}

Now suppose that we label every point $t\in \fp$ by $\sigma(t)$. Thus
the labels at rank $n$ consist of all integers $0,1,\dots, F_{n+3}-2$,
since these are the exponents when we expand $I_n(x)$. The sequence
$S(n)$ of labels at rank $n$, read left-to-right, is a subsequence of
the sequence of labels at rank $n+1$, read left-to-right. Thus the
sequences $S(1),S(2),S(3),\dots$ approach a limit, which is a dense
linear order on the nonnegative integers that we denote by
$\prec$. For instance, from Figure~\ref{fig:fiblabel} we see that
$S(4)=(7,2,10,5,0,8,3,11,6,1,9,4)$.

The order $\prec$ can be described as
follows. Let $0\leq m<n$. Every nonnegative integer has a unique
representation as a sum of nonconsecutive Fibonacci numbers, where a
summand equal to 1 is always taken to be $F_2$ (Zeckendorf's theorem). Let
$m=F_{i_1}+\cdots+F_{i_r}$ and $n=F_{j_1}+\cdots+F_{j_s}$ be such
representations, with $i_1<\cdots<i_r$ and $j_1<\cdots<j_s$. Regard
$F_{i_{r+1}}=F_{j_{s+1}}= F_0=0$. Let $k$ be the least index for which
$i_k\neq j_k$. Then we have $m\prec n$ precisely in the following
cases.
  \begin{itemize}
  \item $i_k$ and $j_k$ are odd, and $i_k<j_k$.
  \item $i_k$ is odd and $j_k$ is even.
  \item $i_k$ is even and nonzero, $j_k$ is even and nonzero, and
    $i_k>j_k$.
  \item $i_k=0$ and $j_k$ is even.
  \end{itemize}
  For instance, let $n\neq 0$. Then $n\prec 0$ if $j_1$ is odd, while
  $n\succ 0$ if $j_1$ is even.
  
The poset $\fp$ is not ``nice'' in regard to its topological
properties. For instance, the rank-selected subposets $\fp_{n-1,n}$,
$n\geq 2$, are not connected, so $\fp$ is not
Cohen-Macaulay. Moreover, its flag $h$-vector $\beta_\fp$
\cite[{\S}3.13]{ec1} can be negative, e.g.,
$\beta_\fp(1,2)=-1$. Despite these shortcomings, $\fp$ does have some
nice structural and enumerative properties which we now discuss.

A poset $P$ is called \emph{upper homogeneous} or \emph{upho}
\cite{upho}\cite{rs:upho} if for every $t\in P$, the dual principal
order ideal $V_t=\{s\in P\st s\geq t\}$ is isomorphic to $P$. It is
easily seen that $\fp$ is upho. In fact, $\fp$ has an especially
simple structure. For $i,b\geq 2$ define the upho poset $P_{ib}$ by
the following conditions:
\begin{itemize}
  \item $P$ has a unique minimal element $\hat{0}$.
  \item Every element of $P_{ib}$ is covered by $i$ elements.
  \item $P_{ib}$ has a planar (i.e., no crossing edges) Hasse diagram
    such that if $u,u'$ are 
    consecutive (reading the Hasse diagram from left-to-right) covers
    of $t$, then the elements $t,u,u'$ ``extend to a $2b$-gon.''
    That is, there is an element $v>t$ for which the Hasse
    diagram of the interval $[t,v]$ contains $u,u'$ and looks like a
    $2b$-gon with no vertices or edges in its interior, and where $t$
    and $v$ are antipodal edges (so the interval $[t,v]$ is graded).
    (See \cite[{\S}4]{upho} for more information on planar upho posets.)
\end{itemize}
It's not hard to see that $P_{ib}$ exists and is unique up to
isomorphism. In particular $\fp\cong P_{23}$, a surprisingly simple
description of $\fp$.  Moreover, the poset
corresponding to Pascal's triangle (i.e., the product of two chains
$t_0<t_1<\cdots$) is isomorphic to $P_{22}$, while the poset
corresponding to Stern's triangle \cite[p.~25]{rs:upho}\cite{yang} is
isomorphic to $P_{32}$. 

Recall now that if $P$ is a finite
graded poset with $\hat{0}$ and $\hat{1}$, then the \emph{Ehrenborg
  quasisymmetric function} $E_P$ of $P$
\cite[Def.~4.1]{ehren}\cite[Exer.~7.48]{ec2} is defined by
    $$ E_P = \sum_{\hat{0}=t_0\leq t_1\leq \cdots \leq
 t_{k-1}<t_k=\hat{1}} x_1^{\rho(t_0,t_1)}x_2^{\rho(t_1,t_2)}\cdots
 x_k^{\rho(t_{k-1},t_k)}, $$
where $\rho(s,t)$ denotes the rank (length) of the interval
$[s,t]$. (The sum ranges over all multichains from $\hat{0}$ to
$\hat{1}$ of all possible
lengths $k\geq 1$ such that $\hat{1}$ occurs with multiplicity one.)
$E_P$ is a kind of generating function for the flag $h$-vector
$\beta_P$ of $P$---knowing $E_P$ is equivalent to knowing $\beta_P$.
If $P$ is infinite, graded, with $\hat{0}$ and with finitely many
elements $q_n$ of each rank $n\geq 0$, then we can extend the
definition of $E_P$ by setting
  $$ E_P = \sum_{t\in P} E_{[\hat{0},t]}. $$

One nice feature of graded upho posets $P$ with finitely many elements
of every rank is that $E_P$ is a symmetric function. If $P$ has 
$q_n$ elements of rank $n$ (so $q_0=1$), then in fact
 $$ E_P = \sum_\lambda q_{\lambda_1}q_{\lambda_2}\cdots m_\lambda, $$
where $\lambda$ ranges over all partitions
$(\lambda_1,\lambda_2,\dots)$ of all nonnegative integers, and where
$m_\lambda$ is a monomial symmetric function. Equivalently, define
  $$ \Phi(P,x) = \sum_{n\geq 0} q_nx^n, $$
the \emph{rank-generating function} of $P$. (The usual notation is
$F(P,q)$, but that might cause confusion with $F_n^{(k)}$ as defined
in {\S}5.) Then \cite[Lemma~2.3]{upho} 
  \beq E_P =
  \Phi(P,x_1)\Phi(P,x_2)\Phi(P,x_3)\cdots. \label{eq:epprod} \eeq 

Let us also note that if $1\leq r_1<r_2<\cdots<r_k$, then the flag
$f$-vector $\alpha_P$ of $P$ at $S=\{r_1,r_2,\dots,r_k\}$ is given by
  $$ \alpha_P(S) = q_{r_1}q_{r_2-r_1}q_{r_3-r_2}\cdots q_{r_k-r_{k-1}},
  $$
since there are $q_{r_1}$ ways to choose an element $t_1$ of rank
$r_1$, then $q_{r_2-r_1}$ ways to choose an element $t_2$ of rank
$r_2$ satisfying $t_2>t_1$, etc.
  
It's not hard to see (since every element of $P_{ib}$ is covered by
$i$ elements, and every element of rank $n-b$ is the bottom element of
$i-1$ $2b$-gons whose top element has rank $n$) that 
  \beq \Phi(P_{ib},x) = \frac{1}{1-ix+(i-1)x^b}. \label{eq:rgf} \eeq
  Equivalently, $q_n$ satisfies the initial conditions and recurrence
  $$ q_0=1, q_1=i, q_2=i^2,\dots,q_{b-1}=i^{b-1} $$
  $$ q_n=iq_{n-1}-(i-1)q_{n-b},\ \ n\geq b. $$
  
In particular, for $\fp\cong P_{23}$ we have
   \beas \Phi(\fp,x) & = & \frac{1}{1-2x+x^3}\\
       & = & \frac{1}{(1-x)(1-x-x^2)}\\ & = &
     \sum_{n\geq 0}(F_{n+3}-1)x^n. \eeas
In other words, the Fibonacci triangle $\mathcal{F}$ has $F_{n+3}-1$
elements in row $n$. Of course this is easy to see by a more direct
argument.

The symmetric functions $E_{P_{ib}}$ have ``nice'' expansions in terms
of the power sum symmetric functions $p_\lambda$ and the forgotten
symmetric functions $\fo_\lambda=\omega m_\lambda$, where $\omega$ is
the standard involution on symmetric functions. Define $\tilde{q}_n$
by
  $$ \tq_0=1, \tq_1=i,\ \tq_2=i^2, \dots,\ \tq_{b-1}=i^{b-1},
   \tq_b=i^b-b(i-1) $$
  $$ \tq_n = i\tq_{n-1}-(i-1)\tq_{n-b},\ n\geq b+1. $$
Thus $\tq_n$ satisfies the same recurrence as $q_n$. but beginning at
$n=b+1$, not $n=b$, and with different initial conditions. We use
notation such as $b^31^4$ to denote the partition $(b,b,b,1,1,1,1)$

\begin{thm}
  \be \item[(a)] We have
   \beq E_{P_{ib}} = \sum_\lambda z_\lambda^{-1} \tq_{\lambda_1}
   \tq_{\lambda_2}\cdots\,p_\lambda,
     \label{eq:epij} \eeq
where $\lambda$ ranges over all partitions
$(\lambda_1,\lambda_2,\dots)$ of all $n\geq 0$.
  \item[(b)] We have
    $$ E_{P_{ib}} = \sum_{n\geq 0} \sum_{j\geq 0}
    (-1)^{jb}(i-1)^j i^{n-jb}\fo_{b^j1^{n-jb}}, $$
where we set $\fo_{b^j1^{n-jb}}=0$ if $jb>n$.
   \ee
\end{thm}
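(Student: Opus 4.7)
The plan is to exploit the product representation $E_{P_{ib}} = \prod_{j \geq 1}\Phi(P_{ib},x_j)$ from \eqref{eq:epprod} combined with the closed form \eqref{eq:rgf}. Set $f(t) = \Phi(P_{ib},t) = 1/(1 - it + (i-1)t^b)$. Part~(a) will follow by taking logarithms, and part~(b) by applying the involution $\omega$ to the resulting exponential representation.

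For (a), expand $\log f(t) = \sum_{k \geq 1} c_k t^k$; then $\log E_{P_{ib}} = \sum_j \log f(x_j) = \sum_k c_k p_k$, since $\sum_j x_j^k = p_k$. The standard exponential expansion, using $z_\lambda = \prod_k k^{m_k(\lambda)} m_k(\lambda)!$, gives
\[
E_{P_{ib}} = \exp\Bigl(\sum_k c_k p_k\Bigr) = \sum_\lambda z_\lambda^{-1}\,p_\lambda \prod_j \lambda_j c_{\lambda_j},
\]
where the inner product runs over parts of $\lambda$. Identifying $\tq_k = kc_k$ for $k \geq 1$ matches the claim. To verify consistency with the stated initial conditions and recurrence, observe that $\sum_{k\geq 1}\tq_k t^k = tf'(t)/f(t) = (it - b(i-1)t^b)f(t)$, so
\[
(1 - it + (i-1)t^b)\sum_{k\geq 1}\tq_k t^k = it - b(i-1)t^b.
\]
Extracting coefficients yields $\tq_k = i^k$ for $1 \leq k \leq b-1$ and the recurrence $\tq_n = i\tq_{n-1} - (i-1)\tq_{n-b}$ for $n$ sufficiently large; the value of $\tq_0$ can be chosen to make the recurrence close up at $n = b$, and is in any case irrelevant to (a) since the parts of $\lambda$ are positive.

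For (b), apply $\omega$ to the exponential form of $E_{P_{ib}}$. Since $\omega(p_k) = (-1)^{k-1}p_k$, one gets
\[
\omega E_{P_{ib}} = \exp\Bigl(\sum_k (-1)^{k-1}c_k p_k\Bigr),
\]
and the series $\sum_k (-1)^{k-1} c_k t^k$ is precisely $-\log f(-t) = \log\bigl(1 + it + (-1)^b(i-1)t^b\bigr)$. Therefore
\[
\omega E_{P_{ib}} = \prod_{j \geq 1}\bigl(1 + ix_j + (-1)^b(i-1)x_j^b\bigr),
\]
an infinite product of polynomials in a single variable. Expand: each factor contributes exactly one of $1$, $ix_j$, or $(-1)^b(i-1)x_j^b$, so the monomial produced necessarily has shape $\lambda = (b^j 1^{n-jb})$, with $j$ factors picking the $x_j^b$ option and $n - jb$ factors picking the $ix_j$ option. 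Grouping distinct monomials of each such shape,
\[
\omega E_{P_{ib}} = \sum_{n \geq 0}\sum_{j \geq 0} (-1)^{jb}(i-1)^j\, i^{n-jb}\, m_{b^j 1^{n-jb}},
\]
and applying $\omega$ once more (using $\omega^2 = \mathrm{id}$ and $\fo_\lambda = \omega m_\lambda$) yields (b).

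The main obstacle I expect is the formal justification of the analytic operations ($\log$, $\exp$, infinite product) inside the appropriate completion of $\Lambda$, and in particular that $\omega$ extends to this completion and commutes with the operations used. This can be resolved by working degree by degree: each homogeneous component of $E_{P_{ib}}$ is a genuine element of $\Lambda$, and the product on the right-hand side of the key identity has well-defined finite-degree truncations, reducing all identities to checks in finitely many variables. Once this framework is in place, the structural heart of the argument is the product formula for $\omega E_{P_{ib}}$, from which (b) is immediate and (a) is the Newton-type dual statement.
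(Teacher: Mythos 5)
Your proof is correct and takes essentially the same route as the paper: both parts start from $E_{P_{ib}}=\prod_m\Phi(P_{ib},x_m)$, with (a) obtained by identifying $\tq_k$ with the power sums of the reciprocal roots of $Q(x)=1-ix+(i-1)x^b$ via the logarithmic derivative $-xQ'(x)/Q(x)$ and then exponentiating, and (b) obtained from $\omega E_{P_{ib}}=\prod_m\left(1+ix_m+(-1)^b(i-1)x_m^b\right)$ expanded into monomial symmetric functions. Your remark that the power-sum recurrence closes at $n=b$ only if $\tq_0=b$ (rather than the stated $\tq_0=1$) is a genuine, though harmless, catch, since $\tq_0$ never appears in the formula of part (a).
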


\begin{proof}
\be\item[(a)] By equations~\eqref{eq:epprod} and \eqref{eq:rgf} we
have
  \beq E_{P_{ib}} = \frac{1}{\prod_m(1-ix_m+(i-1)x_m^b)}. \label{eq:eppr}
  \eeq 
Let $1-ix+(i-1)x^b=\prod_{h=1}^b(1-\alpha_h x)$, where
$\alpha_h\in\cc$. Then 
   \bea \log E_{P_{ib}} & = & -\sum_m \sum_h \log(1-\alpha_hx_m)
      \nonumber\\
      & = & \sum_m \sum_h \sum_{k\geq 1} \alpha_h^k\frac{x_m^k}{k}
      \nonumber\\
  & = & \sum_{k\geq 1}\left(\sum_h \alpha_h^k\right)\frac{p_k}{k},
   \label{eq:alphak} \eea
where $p_k=\sum_m x_m^k$, the $k$th power sum symmetric function of
the $x_m$'s. Now for any polynomial $Q(x)$ with $Q(0)=1$, say
$Q(x)=\prod(1-\beta_hx)$,  we have
  $$ \sum_{k\geq 1}\left(\sum_h \beta_h^k\right)x^k =
      \frac{-xQ'(x)}{Q(x)}. $$ 
Letting $Q(x)=1-ix+(i-1)x^b$ gives
  \beas \sum_{k\geq 1} \left(\sum_h \alpha_h^k\right)x^k & = &
  \frac{ix-(i-1)bx^b}{1-ix+(i-1)x^b}\\ & = &
  ix+i^2x^2+\cdots+i^{b-1}x^{b-1}\\ & &
  \qquad +(i^b-(i-1)b)x^b+\cdots. \eeas
It follows easily that $\sum_h \alpha_h^k = \tq_k$.

Now apply the exponential function exp to \eqref{eq:alphak}. By e.g.\
\cite[Prop.~7.7.4]{ec2}, we obtain equation~\eqref{eq:epij}.    
 \item[(b)] Let $R(x)$ be any power series with constant term 1, and
let $\Gamma=R(x_1)R(x_2)\cdots$. If $\omega$ is the standard
involution on symmetric functions, then an elementary argument gives
  $$ \omega\Gamma = \frac{1}{R(-x_1)R(-x_2)\cdots}. $$
Hence by equation~\eqref{eq:eppr},
  \beas \omega E_{P_{ib}} & = &  \prod_m
  \left(1+ix_m+(-1)^b(i-1)x_m^b\right)\\ & = &
  \sum_{n\geq 0}\sum_{j\geq 0} (-1)^{jb}(i-1)^ji^{n-jb}m_{b^j1^{n-jb}}.
    \eeas
Since $\omega$ interchanges $m_\lambda$ and $\fo_\lambda$, the proof follows.
\ee  
\end{proof}

\textsc{Note.} Fix $i$ and $b$. Let $t_{nk}$ be the $k$th element from
the left in the $n$th row, beginning with $k=0$, of $P_{ib}$. Write
$\brbc nk$ for the number of saturated chains from $\hat{0}$ to
$t_{nk}$, and as usual let $q_n$ be the number of elements of $P_{ib}$
of rank $n$. It is immediate from the recurrence
$q_n=iq_{n-1}-(i-1)q_{n-b}$ and the initial conditions for
$q_0,\dots,q_{i-1}$ that $q_n-q_{n-1}$ is divisible by $i-1$. Set
$r_n=(q_n-q_{n-1})/(i-1)$. Then it can be shown that
  $$ \sum_k \brbc nk x^k =\prod_{j=1}^n \left(1+x^{r_j}+x^{2r_j}+
    \cdots + x^{(i-1)r_j}\right). $$
It might be interesting to further investigate the posets
$P_{ib}$. For the case $P_{32}$ (Stern's poset), see Yang \cite{yang}.  

\section{Some generalizations}
There are several ways we can try to generalize Theorem~\ref{thm1}.
In this section we will consider generalizing the product $I_n(x)$ and
the function $v_2(n)$. However, we continue to deal with Fibonacci
numbers. Let $\alpha=(\alpha_0,\alpha_1,\dots,\alpha_{m-1})\in \nn^m$
(where $\nn=\{0,1,\dots\}$), and define
  $$ v_\alpha(n) = \sum_{k\geq 0}\brbc nk^{\alpha_0}
   \brbc{n}{k+1}^{\alpha_1}\cdots\brbc{n}{k+m-1}^{\alpha_{m-1}}. $$
This definition is completely analogous to the definition of
$u_\alpha(n)$ in \cite{stern}. As in \cite{stern}, we write
$v_{\alpha_0,\dots,\alpha_{m-1}}$ as short for
$v_{(\alpha_0,\dots,\alpha_{m-1})}$.

Our proof of Theorem~\ref{thm1} carries over to the following
result. The argument is analogous. We just have to ascertain that we
don't end up with a system of infinitely many equations. This is
proved in the same way as in \cite[Thm.~2]{stern}.

\begin{thm}
  For any $\alpha\in\nn^m$, the generating function
  $$ J_\alpha(x) = \sum_{n\geq 0} v_\alpha(n)x^n $$
is rational.
\end{thm}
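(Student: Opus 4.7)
The plan is to extend the proof of Theorem~\ref{thm1} by introducing a suitably refined family of sums, deriving a linear system of recurrences among them, and verifying finiteness of the system. First, for each triple $(p,\gamma,\sigma)$ consisting of an offset tuple $0=p_0<p_1<\cdots<p_{r-1}$, a tuple of positive exponents $\gamma=(\gamma_0,\ldots,\gamma_{r-1})$ with $\sum_i \gamma_i=|\alpha|$, and a type tuple $\sigma\in\{1,2,3\}^r$, I would define the refined sum
$$ B_{p,\gamma,\sigma}(n) = \sum_{k\geq 0}\prod_{i=0}^{r-1}\brbc{n}{k+p_i}_{\sigma_i}^{\gamma_i}. $$
Because $\brbc{n}{k}=\brbc{n}{k}_1+\brbc{n}{k}_2+\brbc{n}{k}_3$ has at most one nonzero summand, one has $\brbc{n}{k}^{\alpha_i}=\sum_\tau \brbc{n}{k}_\tau^{\alpha_i}$ whenever $\alpha_i\geq 1$, and $v_\alpha(n)$ expands as a finite sum of such $B$'s, with $(p,\gamma)$ determined by the support of $\alpha$ and $\sigma$ ranging over type assignments to that support.

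Next I would derive recurrences of the form $B_{p,\gamma,\sigma}(n+1)=\sum c\cdot B_{p',\gamma',\sigma'}(n)$ with $c\in\zz$. By the defining rule of $\mathcal{F}$, each refined entry in row $n+1$ is either a copy of a single row-$n$ entry (when it is the first or last element of its generated group) or a sum $\brbc{n}{j}+\brbc{n}{j+1}$ of two consecutive row-$n$ entries (when it is the middle of a generated $3$-group). Substituting these expressions into $B_{p,\gamma,\sigma}(n+1)$ and expanding each $\gamma_i$-th power via the binomial theorem yields an integer linear combination of quantities $B_{p',\gamma',\sigma'}(n)$, with total degree $|\gamma'|=|\alpha|$ preserved.

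The hardest part is verifying that only finitely many triples $(p,\gamma,\sigma)$ occur on the right-hand sides after iteration. Since every $\gamma_i\geq 1$ and $\sum_i\gamma_i=|\alpha|$, we have $r\leq|\alpha|$, so there are only finitely many exponent tuples and, at each length, only finitely many type tuples. For the offsets: since every row-$n$ entry generates two or three row-$n+1$ entries, a window of $m$ consecutive positions in row $n+1$ corresponds to at most roughly $\lceil m/2\rceil+O(1)$ consecutive row-$n$ positions, with the $O(1)$ absorbing the possible $+1$ shift introduced by a middle-of-$3$-group substitution. The substitution thus gives $p'_{r'-1}\leq f(p_{r-1})$ for some function $f$ whose iterates stay bounded (the fixed point of $x\mapsto\lceil x/2\rceil+O(1)$ is a small constant), so the set of admissible triples stabilizes---this is essentially the same closure argument as in \cite[Thm.~2]{stern}. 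Once finiteness is established we obtain a finite linear system $\vec{B}(n+1)=M\vec{B}(n)$ with constant integer matrix $M$, whence each $B_{p,\gamma,\sigma}(n)$, and therefore $v_\alpha(n)$, satisfies a linear recurrence with constant coefficients and $J_\alpha(x)$ is rational, with denominator dividing $\det(I-xM)$.
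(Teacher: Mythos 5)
Your proposal is correct and follows essentially the same route as the paper, which proves this theorem by carrying over the proof of Theorem~\ref{thm1} (refining $v_\alpha(n)$ by the first/middle/last group-position types $\brbc{n}{k}_1,\brbc{n}{k}_2,\brbc{n}{k}_3$ and offsets, deriving a first-order linear system, and checking finiteness as in \cite[Thm.~2]{stern}). Your contraction argument for the offsets is exactly the finiteness check the paper defers to that reference, so nothing essential is missing.
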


We used the Maple package gfun to ``guess'' the rational function
$J_\alpha(x)$ for some small $\alpha$. Gfun finds the ``simplest''
rational function fitting the data, which consists of values of
$v_\alpha(n)$ for small $n$ (typically around $0\leq n\leq
36$). Subsequently Zeilberger \cite{zeil} developed a Maple package
\texttt{SternCF.txt} that can make such computations using variants of
the linear algebra method of Section~\ref{sec:fp}. Thus he obtains
rigorous proofs of results like the following examples, where
$\alpha=(r)$. No guesswork using gfun is necessary.

\beq\label{eq:data} \begin{aligned}
 J_3(x) &=\frac{1-4x^2}{1-2x-4x^2+2x^3}\\
  J_4(x) &=\frac{1-7x^2-2x^4}{1-2x-7x^2-2x^4+2x^5}\\
  J_5(x) &=\frac{1-11x^2-20x^4}{1-2x-11x^2-8x^3-20x^4+10x^5}\\
  J_6(x) &=
  \frac{1-17x^2-88x^4-4x^6}{1-2x-17x^2-28x^3-88x^4+26x^5-4x^6+4x^7}\\
  J_7(x) &=\frac{1-26x^2-311x^4-84x^6}
  {1-2x-26x^2-74x^3-311x^4+34x^5-84x^6+42x^7}.
   \end{aligned} \eeq
Note that the denominators all have odd degree, and the numerator is
the even part of the denominator. This behavior has been verified
empirically (not rigorously) for $n\leq 17$. For $8\leq n\leq 17$, the
denominator degrees are $9,7,9,9,13,11,13,11,13,13$, respectively. See
Conjecture~\ref{conj:drx} for a generalization.

Here are some examples where $\alpha$ has at least two terms:
\beas J_{1,1}(x) & = & \frac{x+x^2}{1-2x-2x^2+2x^3}\\[.5em]
  J_{1,0,1}(x) & = & \frac{2x^2+x^3-x^4}{(1-x)(1-2x-2x^2+2x^3)}\\[.5em]
  J_{2,1}(x) & = & \frac{x+x^2}{1-2x-4x^2+2x^3}\\[.5em]
  J_{1,3}(x) & = & \frac{x+x^2+x^3+x^4}{1-2x-7x^2-2x^4+2x^5}\\[.5em]
  J_{2,2}(x) & = & \frac{x+x^2-x^3-x^4}{1-2x-7x^2-2x^4+2x^5}\\[.5em]
  J_{2,3}(x) & = & \frac{x+x^2-x^3-x^4}{1-2x-11x^2-8x^3-20x^4+10x^5}\\[.5em]
  J_{1,1,1}(x) & = & \frac{2x^2+2x^3-2x^4}{(1-x)(1-2x-4x^2+2x^3)}\\[.5em]
  J_{1,0,2}(x) & = & \frac{2x^2+x^3-2x^4+x^5}{(1-x)^2(1-2x-4x^2+2x^3)}\\[.5em]
  J_{2,1,1}(x) & = &
    \frac{2x^2+2x^3-4x^4+4x^5}{(1-x)^2(1-2x-7x^2-2x^4+2x^5)}\\[.5em]
  J_{1,2,1}(x) & = & \frac{2x^2+4x^3-2x^4}{(1-x)(1-2x-7x^2-2x^4+2x^5)} 
  \eeas
It appears that $J_\alpha(x)$ has a denominator of the form
$(1-x)^{c_\alpha}D_r(x)$, where $c_\alpha\geq 0$, $r=\sum \alpha_i$,
and $D_r(x)$ is the denominator of $J_r(x)$. This heuristic
observation is in complete analogy to \cite[Thm.~3]{stern} and
presumably has a similar proof.

We can also generalize the definition of $I_n(x)$. In analogy to
\cite[Thm.~4]{stern} we have the following conjecture.

\begin{conj} \label{conj1}
Let $h\geq 1$, $(a_1,\dots,a_h)\in\cc^h$, and $P(x)\in\cc[x]$. Set 
  $$ I_{h,P,n}(x) = P(x)\prod_{i=1}^n\left(1+a_1x^{F_i}+
    a_2x^{F_{i+1}}+\cdots+a_hx^{F_{i+h-1}}\right). $$
Regarding $h,P$ as fixed, let $c_n(p)$ denote the coefficient
of $x^p$ in $I_{h,P,n}(x)$. For $\alpha=(\alpha_0,\dots,
\alpha_{m-1})\in\nn^m$ define
  $$ v_{h,P,\alpha}(n) = \sum_{p\geq 0}c_n(p)^{\alpha_0}
   c_n(p+1)^{\alpha_1}\cdots c_n(p+m-1)^{\alpha_{m-1}}. $$
Then the generating function $\sum_{n\geq 0}v_{h,P,\alpha}(n)x^n$ is
rational. 
\end{conj}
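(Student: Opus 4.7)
My plan is to follow the template used for Theorem~\ref{thm1} and \cite[Thm.~2]{stern}, adapted to handle both the $h$-term factor and the polynomial prefactor $P(x)$. First I would reduce to the case $P(x)=1$: writing $P(x)=\sum_e p_e x^e$ expresses each coefficient $c_n(p)$ of $I_{h,P,n}(x)$ as a finite $\mathbb{C}$-linear combination of shifted coefficients $c_n^{(1)}(p-e)$ of $I_{h,1,n}(x)$. Expanding the defining sum for $v_{h,P,\alpha}(n)$ and re-indexing in $p$ then expresses $v_{h,P,\alpha}(n)$ as a finite $\mathbb{C}$-linear combination of quantities $v_{h,1,\beta}(n)$ with $|\beta|=|\alpha|$ (and a possibly larger width $m'$ depending on $\deg P$). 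Since rationality of generating functions is preserved under finite $\mathbb{C}$-linear combinations, it suffices to prove the statement for $P(x)=1$ and all $\beta\in\mathbb{N}^{m'}$.

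With $P=1$, the main task is to set up a finite-dimensional linear recurrence. By Lemma~\ref{lem1}(b), collapses of exponents in $\prod_{i=1}^n\bigl(1+\sum_{j=1}^h a_j x^{F_{i+j-1}}\bigr)$ are governed by the local rewriting $001\leftrightarrow 110$ on Zeckendorf digit strings. Passing from row $n$ to row $n+1$ corresponds to multiplication by a single factor with at most $h$ nonzero terms, and its effect on any window of $m$ consecutive coefficients $c_n(p),\dots,c_n(p+m-1)$ should be determined by the Zeckendorf digits of $p$ in a window of length $L=L(h,m)$ independent of $n$. Mirroring the bookkeeping of the proof of Theorem~\ref{thm1}, I would partition contributions to $v_{h,1,\alpha}(n)$ according to the window-type at position $p$, defining pattern sums $A_T(n)$ indexed by admissible local types $T$. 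The local analysis would then express each $A_T(n+1)$ as a $\mathbb{Z}[a_1,\dots,a_h]$-linear combination of the $A_{T'}(n)$'s, producing a system $v(n+1)=Mv(n)$ over $\mathbb{C}$ for a fixed matrix $M$.

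Once such a finite system is in place, Cayley--Hamilton implies each coordinate of $v(n)$ satisfies a linear recurrence with constant coefficients, so $\sum_{n\ge 0}v_{h,1,\alpha}(n)x^n$ is rational, and the reduction step then delivers the general case. The main obstacle---as in \cite[Thm.~2]{stern}, where the analogous finiteness was already the crux---is verifying that only finitely many pattern types $T$ occur. This is more delicate here than in the proof of Theorem~\ref{thm1} because a single factor can introduce up to $h$ new exponents, so the ``groupings'' that played the role of the $\{2,3\}$-groupings of $\mathcal{F}$ can have size growing with $h$, and the coefficients $a_1,\dots,a_h$ muddle the simple first/middle/last trichotomy used there. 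I expect to overcome this by exhibiting an explicit window length $L=L(h,m)$ beyond which Zeckendorf digits do not interact with the multiplication operation on the relevant window of $m$ consecutive coefficients, giving an explicit bound on the number of pattern sums in terms of $h$, $m$, and $|\alpha|$.
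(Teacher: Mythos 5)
The first thing to note is that the paper offers no proof of this statement: it is stated as Conjecture~\ref{conj1}, in explicit analogy with \cite[Thm.~4]{stern}, and the author proves only special cases (Theorem~\ref{thm1t}, and Theorem~\ref{thm:vk2n} via the free-monoid argument of Lemma~\ref{lem:freegen}). So there is no proof in the paper to compare yours against; your proposal has to stand on its own, and as it stands it has a genuine gap. Your reduction to $P(x)=1$ is fine: expanding $c_n(p)=\sum_e p_e c_n^{(1)}(p-e)$ multinomially does express $v_{h,P,\alpha}(n)$ as a finite linear combination of quantities $v_{h,1,\beta}(n)$ with $|\beta|=|\alpha|$ and window width at most $m+\deg P$, and rationality is preserved under finite linear combinations. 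The problem is the second half.

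The gap is twofold. First, your appeal to Lemma~\ref{lem1}(b) is not valid for $h\geq 2$: that lemma concerns binary sequences, i.e.\ sums of \emph{distinct} Fibonacci numbers, whereas in $\prod_i\bigl(1+\sum_j a_jx^{F_{i+j-1}}\bigr)$ a given $F_\ell$ can be contributed by up to $h$ different factors, so the exponents are Fibonacci-base representations with digits as large as $h$. Controlling their coincidences requires carry rules beyond $001\leftrightarrow 110$ (for instance $2F_\ell=F_{\ell+1}+F_{\ell-2}$), and you have not shown that such carries propagate only a bounded distance. Second, and decisively, the finiteness of the set of pattern types $T$ --- equivalently the existence of your window length $L(h,m)$ --- is not proved but only ``expected,'' yet this is exactly the crux; you say so yourself. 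In the cases the paper does prove, finiteness comes either from the rigid grouping of rows of $\mathcal{F}$ into blocks of size $2$ or $3$, or from the explicit free generating set of Lemma~\ref{lem:freegen}, and no analogue of either is exhibited for general $h$ and general $(a_1,\dots,a_h)$. Until that locality statement is actually established, what you have is a plausible plan of attack on the conjecture (essentially the one the author is implicitly inviting), not a proof.
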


Let us consider one simple special case of this conjecture. Let $t$ be
any complex number (or an indeterminate), and define
   $$ I_{n,t}(x) = \prod_{i=1}^n\left( 1+tx^{F_{i+1}}\right). $$
We now get a triangle $\mathcal{F}(t)$ with the same grouping into
strings of length two
or three as in $\mathcal{F}$, but the first row is $1,t$. Row
$i+1$ is obtained from row $i$ by the following recursive
procedure. If a term $a_j$ of row $i$ ends a string (so $a_{j+1}$
begins a string), then below $a_j,a_{j+1}$ write in row $i+1$ the
3-element string $a_j,ta_j+a_{j+1},ta_{j+1}$. If $a_j$ in row $i$ is the
middle element of a 3-element string, then write in row $i+1$ below
$a_j$ the 2-element string $a_j,ta_j$.

The following result now is proved in complete analogy with the proof
of Theorem~\ref{thm1}.

\begin{thm} \label{thm1t}
Let $v_{2,t}(n)$ denote the sum of the squares of the coefficients of
$I_{n,t}(x)$. Then
 $$ \sum_{n\geq 0} v_{2,t}(n)x^n =
  \frac{1-t(t^2+1)x^2}{1-(t^2+1)x-t(t^2+1)x^2+t(t^4+1)x^3}. $$
\end{thm}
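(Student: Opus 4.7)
The plan is to carry out the proof of Theorem~\ref{thm1} almost verbatim, inserting factors of $t$ wherever the recursion multiplies by $t$. The first ingredient is the $t$-analog of Theorem~\ref{thm:hnfn}: the $k$th entry in row $n$ of $\mathcal{F}(t)$ equals the coefficient of $x^k$ in $I_{n,t}(x)$. This follows by the same induction, using the recursion $I_{n,t}(x)=I_{n-1,t}(x)(1+tx^{F_{n+1}})$, i.e., $c_n(k)=c_{n-1}(k)+t\,c_{n-1}(k-F_{n+1})$; the triangle rule $(a_j,\,ta_j+a_{j+1},\,ta_{j+1})$ for a 3-group below a boundary and $(a_j,\,ta_j)$ for a 2-group below a middle is exactly the corresponding local recurrence.

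Split each row $n$ by position into type 1, 2, 3 entries (first/middle/last of group), and define $A_1,A_2,A_3,A_{3,1},A_{1,2},A_{1,3},A_{2,3}$ as in the proof of Theorem~\ref{thm1}. Redoing the case analysis while tracking which new entries are multiplied by $t$ yields
\begin{align*}
A_1(n+1)&=A_2(n)+A_3(n),\\
A_2(n+1)&=A_1(n)+t^2A_3(n)+2tA_{3,1}(n),\\
A_3(n+1)&=t^2A_1(n)+t^2A_2(n),\\
A_{3,1}(n+1)&=t\bigl(A_{1,2}(n)+A_{1,3}(n)+A_{2,3}(n)\bigr),\\
A_{1,2}(n+1)&=tA_3(n)+A_{3,1}(n),\\
A_{1,3}(n+1)&=tA_2(n),\\
A_{2,3}(n+1)&=tA_1(n)+t^2A_{3,1}(n),
\end{align*}
which specializes at $t=1$ to the system in the proof of Theorem~\ref{thm1}. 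This is $v(n+1)=M_tv(n)$ for a $7\times7$ matrix $M_t\in\zz[t]^{7\times7}$.

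Let $Q(x)=x^3-(t^2+1)x^2-t(t^2+1)x+t(t^4+1)$, the reciprocal of the target denominator. A direct computation shows $\det(xI-M_t)=Q(x)\cdot R(x,t)$, where $R(x,t)$ is a degree-4 ``spurious'' factor that reduces to $x^2(x+1)^2$ at $t=1$. Since $v_{2,t}(n)=A_1(n)+A_2(n)+A_3(n)$ is a coordinate projection of $v(n)$, this sequence satisfies the linear recurrence with characteristic polynomial $Q$, so its generating function has denominator dividing $1-(t^2+1)x-t(t^2+1)x^2+t(t^4+1)x^3$. Matching the initial values $v_{2,t}(0)=1$, $v_{2,t}(1)=1+t^2$, $v_{2,t}(2)=(1+t^2)^2$ (read off directly from $I_{0,t}$, $I_{1,t}$, $I_{2,t}$) then forces the numerator to be $1-(t^3+t)x^2$.

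The main obstacle is the bookkeeping in deriving the seven $t$-weighted recurrences—particularly $A_{3,1}(n+1)$, which requires classifying the three local configurations producing a ``type 3, then type 1'' pair in row $n+1$ (2-group after 3-group, 3-group after 2-group, or 3-group after 3-group) and keeping straight that the outgoing $t$-factor comes from the ``last'' slot of the preceding group in row $n+1$ and nothing else. Once the recurrences are correct, computing the characteristic polynomial of $M_t$ and matching the three initial values are mechanical steps; the conceptual content, just as in Theorem~\ref{thm1}, lies in the observation that $\mathcal{F}(t)$ admits the same local structure (now $t$-weighted) as $\mathcal{F}$.
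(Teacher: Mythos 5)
Your proof is correct and is precisely the route the paper intends---its own proof of this theorem is just the remark that it goes through ``in complete analogy with the proof of Theorem~\ref{thm1}''---and your seven $t$-weighted recurrences check out against direct expansion of $I_{n,t}(x)$ through $n=4$, yielding $v_{2,t}(3)=1+3t^2+2t^3+3t^4+t^6$ and $v_{2,t}(4)=1+4t^2+4t^3+6t^4+4t^5+4t^6+t^8$ in agreement with the claimed generating function. Worth noting: your local rule $(a_j,\,ta_j+a_{j+1},\,ta_{j+1})$ is the one that actually reproduces the coefficients of $I_{n,t}(x)$, whereas the rule printed in the paper, $(a_j,\,a_j+ta_{j+1},\,ta_{j+1})$, already fails at row $2$ (it produces $t,t,t,t$ rather than the coefficient sequence $1,t,t,t^2$ of $(1+tx)(1+tx^2)$), so you have silently corrected a typo in the definition of $\mathcal{F}(t)$.
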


The polynomial $I_{n,-1}(x)=\prod_{i=1}^n\left(1-x^{F_{i+1}}\right)$
has been considered before. It was shown by Yufei Zhao \cite{zhao}
that all its nonzero coefficients are equal to $\pm 1$. Thus
$v_{2,-1}(n)$ is equal to the number of nonzero coefficents of
$I_{n,-1}(x)$, with generating function
  $$ \sum_{n\geq 0}v_{2,-1}(n)x^n = \frac{1+2x^2}{1-2x+2x^2-2x^3}. $$
This fact is stated (in equivalent form) in the OEIS \cite{oeis}.
Note that we can also directly compute, using the technique in the
proof of Theorem~\ref{thm1}, that $v_{4,-1}=v_{2,-1}$. This gives a
new proof (albeit involving a cumbersome computation) of Zhao's
result.   

\begin{ex}
As a somewhat random special case of Conjecture~\ref{conj1}, let
$h=3$, $(a_1,a_2,a_3)=(0,1,1)$, $P(x)=1$, and $\alpha=(2)$. Thus we
are considering the sum $w(n)$ of the squares of the coefficients of
the product
$\prod_{i=1}^n\left( 1+x^{F_{i+1}}+x^{F_{i+2}}\right)$. Then gfun
suggests and Zeilberger \cite[p.~15]{zeil} confirms that
 {\small  $$ \sum_{n\geq 0}
   w(n)x^n=\frac{1-4x-5x^2+24x^3+4x^4-34x^5+2x^6+10x^7-4x^8}
   {1-7x+x^2+47x^3-32x^4-84x^5+50x^6+34x^7-18x^8}. $$}
In fact, Zeilberger is able to compute the generating function for
$\alpha=(r)$ when $1\leq r\leq 6$. For $r=6$ the denominator has
degree 405.
\end{ex} 

\textsc{Note.} There is an alternative way of describing the nonzero
coefficients of the polynomial $I_n(x)=\prod_{i=1}^n \left(
1+x^{F_{i+1}}\right)$. Let $\ca_n$ denote the set of all words of
length $n$ in the letters $a,b$, so $\#\ca_n=2^n$. Define
$\pi,\sigma\in\ca_n$ to be \emph{equivalent} if $\sigma$ can be
obtained from $\pi$ by a sequence of substitutions (on three
consecutive terms) $baa\to abb$ and $abb\to baa$, an obvious
equivalence relation $\sim$. For instance, when $n=5$ one of the
equivalence classes is $\{baaaa,abbaa,ababb\}$. The quotient monoid of
the free monoid generated by $a,b$ modulo $\sim$ is called the
\emph{Fibonacci monoid} in \cite{wikifm}, though other monoids are
also called the Fibonacci monoid. Here we are interested not in the
monoid itself, but rather the sizes of its equivalence classes. It
follows easily from Lemma~\ref{lem1}(b) that the multiset $M_n$ of
equivalence class sizes of $\sim$ on $\ca_n$ coincides with the
multiset of (nonzero) coefficients of $I_n(x)$. Thus if $u^*_n(r) =
\sum_{j\in M_n} j^r$ ($r\in\nn$), then the generating function
$\sum_{n\geq 0} u^*_n(r)x^n$ is rational. What other equivalence
relations on $\ca_n$ obtained by substitutions of words of equal
length yield rational generating functions? For instance, the substitutions
$ab\leftrightarrow ba$ do not give rational generating functions for
$r\geq 2$. For $r=2$ the generating function is algebraic but not
rational, while for $r\geq 3$ it is D-finite but not algebraic
\cite[Exer.~6.3,\,6.54]{ec2}. Thus we can also ask in general when we
  get algebraic and D-finite generating functions.

\textsc{Note.} It is a nice exercise to show that if $f_1,f_2,\dots$
is a sequence of positive integers satisfying $f_1\neq f_2$ and
$f_{i+1}=f_i+f_{i-1}$ for all $i\geq 2$, then for all $n\geq 1$ the
sequence of nonzero coefficients of the polynomial $\prod_{i=1}^n
\left( 1+x^{f_i}\right)$ depends only on $n$.

\section{Generalizing the Fibonacci numbers} \label{sec:genfib}
What happens if we replace $F_{i+1}$ in the definition
\eqref{eq:indef} and its generalizations with some other sequence?  We
consider only sequences $f_1,f_2,\dots$ satisfying linear recurrences
with constant integer coefficients, called \emph{$C$-finite sequences}
by Zeilberger \cite{zeil}. Note that if $f_{i+1}\geq 2f_i$
for all $i$, then the nonzero coefficients of $\prod_{i=1}^n\left(
1+x^{f_i}\right)$ are all equal to 1, which is not so interesting. One
class of sequences that have more interesting behavior is given for
fixed $k\geq 1$ by 
  $$ F_{i+1}^{(k)}=F_{i}^{(k)}+F_{i-1}^{(k)}+\cdots+F_{i-k+1}^{(k)}, $$
say with initial conditions $F^{(k)}_{1}=F^{(k)}_{2}=\cdots
=F^{(k)}_{k}=1$. Thus $F_{i}^{(2)}=F_i$.

We conjecture that Conjecture~\ref{conj1} has a direct
$F^{(k)}$-analogue. 

\begin{conj}
Let $k\geq 2$, $h\geq 1$, $(a_1,\dots,a_h)\in\cc^h$, and
$P(x)\in\cc[x]$. Set  
  $$ I^{(k)}_{h,P,n}(x) = P(x)\prod_{i=1}^n\left(1+a_1x^{F^{(k)}_i}+
    a_2x^{F^{(k)}_{i+1}}+\cdots+a_hx^{F^{(k)}_{i+h-1}}\right). $$
Regarding $h,P,k$ as fixed, let $c_n(p)$ denote the coefficient
of $x^p$ in $I^{(k)}_{h,P,n}(x)$. For $\alpha=(\alpha_0,\dots,
\alpha_{m-1})\in\nn^m$ define
  $$ v^{(k)}_{h,P,\alpha}(n) = \sum_{p\geq 0}c_n(p)^{\alpha_0}
   c_n(p+1)^{\alpha_1}\cdots c_n(p+m-1)^{\alpha_{m-1}}. $$
Then the generating function $\sum_{n\geq
0}v^{(k)}_{h,P,\alpha}(n)x^n$ is rational. 
\end{conj}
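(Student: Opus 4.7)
The plan is to adapt the strategy used to prove Theorem~\ref{thm1} and its counterpart [Thm.~2]{stern}: reduce to a tractable normal form for exponents, identify a bounded local pattern structure in the expansion, and then set up a finite system of linear recurrences on bilinear quantities whose solution forces rationality of $\sum_{n\geq 0}v^{(k)}_{h,P,\alpha}(n)x^n$.

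First, I would generalize Lemma~\ref{lem1}. Let $\phi_k$ be the dominant real root of $t^k - t^{k-1}-\cdots -1$. The identity $F^{(k)}_{i-k+1}+\cdots+F^{(k)}_i = F^{(k)}_{i+1}$ (and its $\phi_k$-analogue $\phi_k^{i-k+1}+\cdots+\phi_k^i = \phi_k^{i+1}$) provides the substitution rule that replaces a block $1^k0$ by $0^k 1$, and conversely. A straightforward induction on the largest occupied index shows that any two finitely-supported $\{0,1\}$-sequences representing the same $\nn$-valued sum under $F^{(k)}$ are related by a sequence of such moves. This lets us replace $x^{F^{(k)}_i}$ by $x^{\phi_k^i}$ throughout, obtaining a power series with exponents in $\zz[\phi_k]$ whose sequence of coefficients is identical to that of $I^{(k)}_{h,P,n}(x)$.

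Second, I would generalize the Fibonacci triangle $\mathcal F$ by constructing a triangle $\mathcal{F}^{(k)}_{h,P}$ whose rows encode the coefficient sequences of $I^{(k)}_{h,P,n}(x)$. The critical structural claim is that, for fixed $h$ and $k$, the support of each row partitions into \emph{runs} of consecutive integer exponents whose lengths are uniformly bounded by some $L=L(h,k)$, and the coefficients within a run are determined, modulo finitely many \emph{run types}, by a bounded amount of local data. When we multiply row $n$ by the factor indexed by $n+1$, each run splits and merges with neighboring runs in a way that depends only on its type; the types of the new runs, together with their coefficients, are computable from a bounded neighborhood of old types and coefficients in row $n$. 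This is the analogue of the ``groups of size 2 or 3'' phenomenon in Section~\ref{sec:fp}.

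Third, given the bounded local structure, I would introduce state variables $A_\beta(n)$ indexed by pairs (run-type-tuple, position-tuple within the runs, of total width $m$), each defined as a sum over $p$ of a product $\prod_j c_n(p+j)^{\alpha_j}$ restricted to those $p$ whose neighborhood has type tuple $\beta$. The local update rules translate into a system $v(n+1)=Mv(n)$ with $M$ a fixed integer matrix. Then $v^{(k)}_{h,P,\alpha}(n)$ is a fixed linear combination of the entries of $v(n)$, and $\sum_n v^{(k)}_{h,P,\alpha}(n)x^n$ is rational with denominator dividing $x^{\deg Q}Q(1/x)$, where $Q(x)=\det(xI-M)$, exactly as in Section~\ref{sec:fp}. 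The argument that this linear system is finite mirrors [Thm.~2]{stern}: one shows that only finitely many state types ever arise.

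The main obstacle is the structural claim in the second paragraph: establishing a uniform bound on run lengths and on the number of run types for arbitrary $k$ and $h$. In the $k=2$ case one uses the very specific arithmetic of Zeckendorf representations and the golden ratio to see that runs have length at most three; for general $k$ the interaction between the shifted exponents $F^{(k)}_{n+1},\dots,F^{(k)}_{n+h}$ and the existing run structure is substantially more intricate, and a careful analysis of $\phi_k$-representations (perhaps via the $\beta$-expansion associated to $\phi_k$) will be needed. Once that structural result is in hand, the rest of the proof is a mechanical extension of Section~\ref{sec:fp}.
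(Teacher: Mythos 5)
There is no proof of this statement in the paper to compare against: it is stated (and left) as a conjecture, with only the special case $\prod_{i=1}^n\bigl(1+tx^{F^{(k)}_{i+k-1}}\bigr)$, $\alpha=(2)$, actually proved (Theorem~\ref{thm:vk2n}), and by a different method — free generation of the monoid $\cm^{(k)}$ by the explicit set $\cg^{(k)}$ and a transfer-matrix argument, not the triangle/recurrence machinery of Section~\ref{sec:fp}. Your proposal is a reasonable program for attacking the conjecture, but it is not a proof, and you concede the decisive point yourself: the claim that each row of your generalized triangle decomposes into runs of uniformly bounded length belonging to finitely many types, evolving by local rules. That claim is precisely where the difficulty of the conjecture lives; without it there is no finite state space, no matrix $M$, and no rationality. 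Everything in your third paragraph is conditional on it.

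There is a second, more concrete gap earlier in the argument. Your generalization of Lemma~\ref{lem1} treats only $\{0,1\}$-sequences and the single exchange $1^k0\leftrightarrow 0^k1$ coming from $F^{(k)}_{i}+\cdots+F^{(k)}_{i+k-1}=F^{(k)}_{i+k}$. But for $h\geq 2$ the exponents of $I^{(k)}_{h,P,n}(x)$ are sums $\sum_j c_jF^{(k)}_j$ in which the same $F^{(k)}_j$ can be chosen from up to $h$ consecutive factors, so the multiplicities $c_j$ can exceed $1$ (and $P(x)$ contributes arbitrary integer shifts besides). Classifying when two such bounded-multiplicity representations give the same integer requires a strictly stronger statement than Lemma~\ref{lem1}(b) — essentially a description of all relations among representations in base $\phi_k$ with digits in $\{0,1,\dots,h\}$ — and it is not clear that these relations are generated by local moves of bounded width. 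Until both of these structural results are established, the reduction to a finite linear system does not go through, and the statement remains a conjecture.
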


For the special case
  $$ I_{H,P,n}^{(k)}(x) = \prod_{i=1}^n\left(
            1+tx^{F_{i+k-1}^{(k)}}\right), $$
we can prove this conjecture by a
combinatorial technique. When $k=2$ this gives a new proof of
Theorem~\ref{thm1}.

To give this proof, for $k\geq 2$ define $\cm^{(k)}$ to be the set of all
pairs $\pi$ of finite binary sequences of the same length, say $n$,
denoted 
   \beq \pi = \left(\begin{array}{cccc}
      a_1 & a_2 & \cdots & a_n\\
      b_1 & b_2 & \cdots  & b_n \end{array} \right), \label{eq:pi}
   \eeq 
such that
  $$ \sum_{i=1}^n a_iF^{(k)}_{i+k-1} = \sum_{i=1}^n
       b_iF_{i+k-1}^{(k)}. $$
It is easily seen that if $\pi\in\cm^{(k)}$ (where $\pi$ is given by
equation~\eqref{eq:pi}) and if
  $$ \sigma = \left(\begin{array}{cccc}
      c_1 & c_2 & \cdots & c_p \\
      d_1 & d_2 & \cdots  & d_p \end{array} \right) \in \cm^{(k)}, $$
then the concatenation
   $$ \pi\sigma = \left(\begin{array}{cccccccc}
      a_1 & a_2 & \cdots & a_n & c_1 & c_2 & \cdots & c_p\\
      b_1 & b_2 & \cdots  & b_n & d_1 & d_2 & \cdots &
      d_p \end{array} \right) $$
also belongs to $\cm^{(k)}$. Thus $\cm^{(k)}$ is a monoid under
concatenation. (The empty array is the identity element.)

For a binary letter $a=0,1$ let $a^j$ denote a sequence of $j$
$a$'s. For instance, $0^4=0000$.  Given $k\geq 2$, let $\cg^{(k)}$ be
the set of all pairs of binary sequences equal to
\beq \left( \begin{array}{c} 0\\0\end{array}\right)\ \mathrm{or}\
    \left( \begin{array}{c} 1\\1\end{array}\right),
      \label{eq:01} \eeq
or equal to one of the two forms (which differ by interchanging the
rows) 
\beq\label{eq:gens}
   \begin{aligned}
    \pi &= \left(\begin{array}{cccccccccccc} 1^k &
      * & 1^{k-1} & * & 1^{k-1} 
        & * & 1^{k-1} & * & \cdots & * & 1^{k-1} & 0\\ 
        0^k & * & 0^{k-1} & * & 0^{k-1} & *
        & 0^{k-1} & * & \cdots & * & 0^{k-1} & 1 \end{array}
    \right),\ \mathrm{or}\\[.5em]
    \sigma &= \left(\begin{array}{cccccccccccc}
      0^k & * & 0^{k-1} & * & 0^{k-1} & *
        & 0^{k-1} & * & \cdots & * & 0^{k-1} & 1\\
      1^k & * & 1^{k-1} & * & 1^{k-1}
        & * & 1^{k-1} & * & \cdots & * & 1^{k-1} & 0
         \end{array} \right), \end{aligned} \eeq
where * can be 0 or 1, but two *'s in the same column must be
equal. It's easy to see that $\cg^{(k)}\subset \cm^{(k)}$.
Then the following key lemma is fairly straightforward to prove.

\begin{lem} \label{lem:freegen}    
The set $\cg^{(k)}$ freely generates $\cm^{(k)}$. That is, every element $\pi$
of $\cm^{(k)}$ can be written uniquely as a product of words in $\cg^{(k)}$.
\end{lem}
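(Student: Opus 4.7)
The plan is to prove Lemma~\ref{lem:freegen} by a greedy left-to-right parsing of $\pi\in\cm^{(k)}$ organized around the \emph{debt function}
\[
D_j(\pi) := \sum_{i=1}^{j}(a_i-b_i)\,F^{(k)}_{i+k-1},\qquad 0\le j\le n,
\]
so that $\pi\in\cm^{(k)}$ iff $D_n(\pi)=0$. The key preliminary point is shift-invariance: any $\{-1,0,1\}$-valued finite-support relation $\sum m_i F^{(k)}_{i+k-1}=0$ lies in the $\zz$-span of shifted copies of the $k$-nacci recurrence, since splitting $(m_i)=(a_i)-(b_i)$ into disjointly-supported binary sequences produces two $F^{(k)}$-expansions of the same integer, which can be interconverted by shifted recurrence moves in the spirit of Lemma~\ref{lem1}(b). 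Consequently, whenever $D_j(\pi)=0$ for some $0<j<n$, both the length-$j$ prefix and the length-$(n-j)$ suffix of $\pi$ again lie in $\cm^{(k)}$, and cutting $\pi$ at every interior zero of $D$ expresses it uniquely as a concatenation of \emph{irreducible} elements (those $\pi$ with $D_j\neq 0$ for all $0<j<n$).

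It now suffices to identify the irreducibles with $\cg^{(k)}$. The easy direction is a direct check that every element of $\cg^{(k)}$ is irreducible. The trivial generators have length $1$. For the $\pi$-type generator of length $n=k(m{+}1)+1$, induction on the block index $j$, using the recurrence in the form $F^{(k)}_{(j+1)k}=F^{(k)}_{jk}+F^{(k)}_{jk+1}+\cdots+F^{(k)}_{(j+1)k-1}$, yields $D_{jk}=F^{(k)}_{(j+1)k}>0$ at each block boundary with intermediate values staying strictly positive, up to the terminal column which contributes $-F^{(k)}_{(m+2)k}$ and returns $D$ to $0$; the $\sigma$-type is symmetric.

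For the converse I would proceed by induction on $n$. When $n=1$, $a_1=b_1$ forces a trivial generator. When $n>1$ and $\pi$ is irreducible, $a_1\neq b_1$; by row-swap symmetry assume $(a_1,b_1)=(1,0)$, so $D_1=F^{(k)}_k$. I would then show column-by-column that $\pi$ must match the $\pi$-type template: positions $2,\ldots,k$ are forced to have $e_i:=a_i-b_i=1$, so $D_k=F^{(k)}_{2k}$; position $k+1$ must have $e_{k+1}\in\{0,-1\}$, giving either the terminal step of the $m=0$ generator or a ``star''; iterating over blocks produces the full template and the closing $e_n=-1$.

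The main obstacle is establishing this forcing, which is essentially a $k$-nacci analog of Zeckendorf uniqueness: at any step where the debt equals $F^{(k)}_{rk}$, the only $\{-1,0,1\}$-weighted continuation of $F^{(k)}_{rk+1},F^{(k)}_{rk+2},\ldots$ that irreducibly returns the debt to $0$ is the recurrence-dictated one, since any alternative either makes $D$ vanish prematurely or drives it into a range from which no admissible signed-sum completion is possible. I expect this to follow from an elementary induction on position analogous to Lemma~\ref{lem1}(b), exploiting that the admissible single-step increment $e_j F^{(k)}_{j+k-1}$ is tightly constrained relative to the current debt. Once this key step is in place, the irreducibles of $\cm^{(k)}$ coincide with $\cg^{(k)}$, and the zero-crossings of $D$ furnish the unique factorization.
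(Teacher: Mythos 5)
The paper gives no proof of Lemma~\ref{lem:freegen} (it is merely asserted to be ``fairly straightforward''), so your attempt has to be judged on its own terms. The architecture you set up is the natural one and is surely what the author intends: track the prefix debt $D_j=\sum_{i\le j}(a_i-b_i)F^{(k)}_{i+k-1}$, cut at its interior zeros, and identify the resulting irreducible factors with $\cg^{(k)}$. Your check that each template in \eqref{eq:gens} is irreducible (debt $F^{(k)}_{(j+1)k}>0$ at the $j$th block boundary, staying positive in between, returning to $0$ only at the final column) is correct.

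As written, however, this is a proof plan whose two load-bearing steps are missing. First, the shift-invariance you invoke --- that a $\{-1,0,1\}$-relation $\sum_i m_iF^{(k)}_{i+k-1}=0$ survives reindexing, so that the prefix and suffix cut at a zero of $D$ again lie in $\cm^{(k)}$ --- is exactly the $k$-step generalization of Lemma~\ref{lem1}(b); the paper proves that only for $k=2$ (and even there only by an omitted induction), and you assert the general case ``in the spirit of'' that lemma without an argument. Second, and more seriously, the converse inclusion (every irreducible element of $\cm^{(k)}$ matches one of the templates) is the entire content of the lemma, and you explicitly defer it: ``I expect this to follow from an elementary induction.'' The forcing step there is not routine. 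Already for $k=2$, to force $e_2=1$ after $e_1=1$ you must show that neither $+1$ nor $-1$ is a $\{-1,0,1\}$-combination of the tail $F_4,F_5,\dots$ (otherwise $e_2\in\{0,-1\}$ could be completed to an element of $\cm^{(2)}$ off your template); ruling out every such escape at every stage of every block amounts to a quantitative lower bound on nonzero signed subset sums of tails of the $k$-nacci sequence, which is precisely the Zeckendorf-type input you have not supplied. Until both of these points are proved, the lemma is not established.
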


We can now state the main (nonconjectural) result of this section.

\begin{thm} \label{thm:vk2n}
Let $v^{(k)}_2(n,t)$ denote the sum of the squares of the coefficients
of the polynomial $\prod_{i=1}^n\left(
1+tx^{F_{i+k-1}^{(k)}}\right)$. Then
  $$ \sum_{n\geq 0} v^{(k)}_2(n,t)x^n = \frac{1-t^{k-1}(1+t^2)x^k}
    {1-(1+t^2)x-t^{k-1}(1+t^2)x^k+t^{k-1}(1+t^4)x^{k+1}}. $$
\end{thm}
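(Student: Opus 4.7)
The plan is to interpret $v_2^{(k)}(n,t)$ as a weighted enumeration of length-$n$ elements of the monoid $\cm^{(k)}$, apply Lemma~\ref{lem:freegen} to turn this into $1/(1-W(x,t))$ for an explicit rational $W$, and then simplify.

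Expanding the product gives
$$c_n(p) \;=\; \sum_{\substack{A\subseteq[n]\\ \sum_{i\in A} F_{i+k-1}^{(k)}\,=\,p}} t^{|A|},$$
and hence
$$v_2^{(k)}(n,t) \;=\; \sum_{A,B\subseteq[n]} t^{|A|+|B|}\,\bigl[\textstyle\sum_{i\in A}F_{i+k-1}^{(k)} \,=\, \sum_{i\in B}F_{i+k-1}^{(k)}\bigr].$$
Encoding $(A,B)$ as the top and bottom rows of a pair of binary sequences of length $n$, this says $v_2^{(k)}(n,t) = \sum_\pi t^{\mathrm{wt}(\pi)}$ taken over all $\pi\in\cm^{(k)}$ with $n$ columns, where $\mathrm{wt}(\pi)$ is the total number of $1$'s in $\pi$. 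Both length (number of columns) and $\mathrm{wt}$ are additive under concatenation, so by Lemma~\ref{lem:freegen},
$$\sum_{n\geq 0} v_2^{(k)}(n,t)\,x^n \;=\; \frac{1}{1-W(x,t)}, \qquad W(x,t)\;:=\;\sum_{g\in\cg^{(k)}} x^{\mathrm{len}(g)}\,t^{\mathrm{wt}(g)}.$$

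Next I compute $W(x,t)$ by inspecting the generator families. The two length-one generators from \eqref{eq:01} contribute $(1+t^2)x$. Each $\pi$-type generator from \eqref{eq:gens} decomposes into an initial block $1^k/0^k$ (length $k$, weight $t^k$), a terminal column $0/1$ (length $1$, weight $t$), and $j\geq 0$ intermediate cycles $[\,*,\,1^{k-1}/0^{k-1}\,]$, each of length $k$ and total weight $(1+t^2)t^{k-1}$ (summing over the two allowed $*$-columns $0/0$ and $1/1$, each pairing with the fixed $t^{k-1}$ coming from $1^{k-1}/0^{k-1}$). Summing the geometric series in $j$, and accounting for the identical $\sigma$-type contribution, yields
$$W(x,t) \;=\; (1+t^2)\,x \;+\; \frac{2\,t^{k+1}x^{k+1}}{1-(1+t^2)t^{k-1}x^k}.$$
Clearing the inner denominator in $1/(1-W)$, the resulting denominator is
$$\bigl(1-(1+t^2)x\bigr)\bigl(1-(1+t^2)t^{k-1}x^k\bigr) \;-\; 2\,t^{k+1}x^{k+1}.$$
Expanding this and using the elementary identity $(1+t^2)^2 t^{k-1} - 2t^{k+1} = t^{k-1}(1+t^4)$ produces exactly the claimed denominator $1-(1+t^2)x-t^{k-1}(1+t^2)x^k + t^{k-1}(1+t^4)x^{k+1}$, while the numerator becomes the claimed $1-t^{k-1}(1+t^2)x^k$.

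The main obstacle is not the generating-function calculation but Lemma~\ref{lem:freegen}, which the paper only describes as ``fairly straightforward.'' Its unique-factorization content really rests on a Zeckendorf-style analysis (in the spirit of Lemma~\ref{lem1}) of solutions to $\sum a_i F_{i+k-1}^{(k)} = \sum b_i F_{i+k-1}^{(k)}$: one must show that the leftmost nontrivial structure of any $\pi\in\cm^{(k)}$ is forced to be either the short pattern \eqref{eq:01} or one of the two templates in \eqref{eq:gens}, with the $*$-entries and the number of middle cycles determined by $\pi$. Once that is in hand, the enumeration above is mechanical and gives Theorem~\ref{thm:vk2n}; specializing to $k=2, t=1$ recovers Theorem~\ref{thm1}.
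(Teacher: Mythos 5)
Your proposal is correct and follows essentially the same route as the paper: interpret $v_2^{(k)}(n,t)$ as a weighted count over length-$n$ elements of $\cm^{(k)}$, invoke Lemma~\ref{lem:freegen} to get $1/(1-G^{(k)}(x))$, and compute the generator series $(1+t^2)x+2t^{k+1}x^{k+1}/(1-t^{k-1}(1+t^2)x^k)$ exactly as the paper does. Your observation that the real content lies in the unproved free-generation lemma matches the paper, which likewise only asserts that lemma without proof.
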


\begin{proof}
Write $\ell(\pi)$ for the length of $\pi\in\cm^{(k)}$, and write
$N(\pi)$ for the total number of 1's in $\pi$.  Note that
$\ell(\pi\sigma) = \ell(\pi)+\ell(\sigma)$ and
$N(\pi\sigma) = N(\pi)+N(\sigma)$. Define
  $$ G^{(k)}(x) =\sum_{\pi \in\cg^{(k)}}
     t^{N(\pi)}x^{\ell(\pi)}. $$ 
By a standard simple argument (see \cite[{\S}4.7.4]{ec1}),
  $$ \sum_{n\geq 0} v^{(k)}_2(n,t)x^n = \frac{1}{1-G^{(k)}(x)}. $$
\indent We can use Lemma~\ref{lem:freegen} to compute $G^{(k)}(x)$.
The two generators in equation~\eqref{eq:01} contribute
$(1+t^2)x$ to 
$G^{(k)}(x)$. Now consider the generators $\pi$ and $\sigma$ of
equation~\eqref{eq:gens}. The two generators differ only by switching
rows, so consider just $\pi$. Suppose there are $j\geq 0$ columns of
$*$'s. The number of 1's in the remaining columns is
$k+j(k-1)+1$. The length of $\pi$ is $(j+1)k+1$. Since each of the $j$
columns of $*$'s has zero or two 1's, the contribution to $G^{(k)}(x)$
of all generators \eqref{eq:gens} is $t^{j(k-1)+k+1}(1+t^2)^j
x^{jk+k+1}$. The same is true of the second generator $\sigma$. Hence
  \beas G^{(k)}(x) & = & (1+t^2)x+2\sum_{j\geq 0} t^{j(k-1)+k+1}(1+t^2)^j
            x^{jk+k+1}\\  & = & (1+t^2)x+
            \frac{2t^{k+1}x^{k+1}}{1-t^{k-1}(1+t^2)x^k}.
  \eeas
It follows that
  \beas \sum_{n\geq 0} v_2^{(k)}(n,t)x^n & = & \frac{1}
        {1-(1+t^2)x-\frac{2t^{k+1}x^{k+1}}{1-t^{k-1}(1+t^2)x^k}}\\ & = &
        \frac{1-t^{k-1}(1+t^2)x^k}
    {1-(1+t^2)x-t^{k-1}(1+t^2)x^k+t^{k-1}(1+t^4)x^{k+1}}. \eeas
\end{proof}

Naturally we can ask how the statement and proof of
Theorem~\ref{thm:vk2n} can be extended. For any $r\geq 2$ let
$v^{(k)}_r(n,t)$ denote the sum of the $r$th powers of the coefficients
of the polynomial
$\prod_{i=1}^n\left(1+tx^{F_{i+k-1}}\right)$. Define the monoid
$\cm^{(k)}(r)$ analogously to $\cm^{(k)}$ by letting the elements of
$\cm^{(k)}(r)$ be $r$-tuples of binary words of the same length such
that $\sum_{i=1}^n a_iF^{(k)}_{i+k-1}$ is the same for all the $r$
words $a_1a_2\cdots a_n$. It's easy to see that $\cm^{(k)}(r)$ is a
free monoid, basically because if $\pi$ and $\sigma$ are $r$-tuples of
binary words such that $\pi\in\cm^{(k)}(r)$ and $\pi\sigma\in
\cm^{(k)}(r)$, then $\sigma\in\cm^{(k)}(r)$. However, finding the free
generators of $\cm^{(k)}(r)$ seems complicated for $r\geq 3$, and we
have not tried to do so. For $r=3$ we have the following conjecture,
due to Zeilberger \cite{zeil} (verified by him for $k\leq 5$), a
correction of the conjecture in the original version of the present paper.
We use the notation $u=t^{k-1}$ and
  $$ J_r^{(k)}(t,x) = \sum_{n\geq 0}v_r^{(k)}(n,t)x^n. $$

\begin{conj} \label{conj:v3k}
  We have
  $$ J_3^{(k)}(t,x) =\frac{1-u(u+1)(t^3+1)x^k+u^3(t^3-1)^2x^{2k}}
  {D_3^{(k)}(t,x)}, $$
where

\beas D_3^{(k)}(t,x) & = &
  1-(t^3+1)x-u(u+1)(t^3+1)x^k\\ & & +u(u+1)(t^6-t^3+1)x^{k+1}
  +u^3(t^3-1)^2x^{2k}\\ & &
  +u^3(t^3-1)(t^6-1)x^{2k+1}.
 \eeas
\end{conj}

Note that the numerator in the above conjecture consists of the terms
in the denominator with $x$-exponents $0,k,2k$. For 
higher values of $r$ the coefficients seem to be more complicated. For
instance, it seems that the coefficient of $x^4$ in the denominator of 
$J_4^{(k)}(t,x)$ is
$t^2(t^{12}+t^{10}-t^8-4t^6-t^4+t^2+1)$. The factor
$t^{12}+t^{10}-t^8-4t^6-t^4+t^2+1$  is
irreducible over $\qq$. We give below some conjectures when $t=1$.



\begin{conj} \label{conj:jrkx}
We have
  \beas 
  J_4^{(k)}(1,x) & = &
  \frac{1-7x^k-2x^{2k}}{1-2x-7x^k-2x^{2k}+2x^{2k+1}}\\
    J_5^{(k)}(1,x) & = &
    \frac{1-11x^k-20x^{2k}}{1-2x-11x^k-8x^{k+1}-20x^{2k}+10x^{2k+1}}\\
  J_6^{(k)}(1,x) & = &
  \frac{1-17x^k-88x^{2k}-4x^{3k}}{D_6^{(k)}(1,x)}\\
    J_7^{(k)}(1,x) & = &
    \frac{1-26x^k-311x^{2k}-84x^{3k}}{D_7^{(k)}(1,x)},
   \eeas
where
   $$ D_6^{(k)}(1,x)=1-2x-17x^k-28x^{k+1}
   -88x^{2k}+26x^{2k+1}-4x^{3k}+4x^{3k+1} $$
and
  $$ D_7^{(k)}(1,x) = 1-2x-26x^k-74x^{k+1}-311x^{2k}+34x^{2k+1}-84x^{3k}
      +42x^{3k+1}.$$
\end{conj}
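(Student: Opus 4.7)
The plan is to mirror the proof of Theorem~\ref{thm:vk2n} case by case for $r \in \{4, 5, 6, 7\}$. Fix $r$ and $k$, and recall from the discussion following Theorem~\ref{thm:vk2n} that the monoid $\cm^{(k)}(r)$ of $r$-tuples of equal-length binary words with common sum $\sum a_i F^{(k)}_{i+k-1}$ is free. Let $\cg^{(k)}_r \subset \cm^{(k)}(r)$ denote its unique set of free generators and set
$$G^{(k)}_r(x) = \sum_{\pi \in \cg^{(k)}_r} x^{\ell(\pi)}.$$
The same counting argument used in Theorem~\ref{thm:vk2n} (specialized at $t = 1$) gives $J^{(k)}_r(1, x) = 1/(1 - G^{(k)}_r(x))$, so the proof reduces to describing $\cg^{(k)}_r$ explicitly and summing the lengths.

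The main step is to classify $\cg^{(k)}_r$ in analogy with Lemma~\ref{lem:freegen}. A generator is an $r$-tuple with no proper prefix in $\cm^{(k)}(r)$. Because the only local balancing move is $F^{(k)}_{i+k-1} + \cdots + F^{(k)}_{i+2k-2} = F^{(k)}_{i+2k-1}$, each generator decomposes into a sequence of ``active'' $k$-column blocks separated by single ``wildcard'' columns where all $r$ rows agree. For $r = 2$ this recovers the one-parameter family of Lemma~\ref{lem:freegen}. For $r \geq 3$, within each active block the rows may split into two nonempty subsets (those carrying $0^{k-1}1$ versus those carrying $1^k$), and consecutive active blocks can be linked in many ways while preserving indecomposability. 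I would encode generators as walks on a finite directed graph whose vertices are the allowed partition-type data for an active block and whose edges record indecomposability-preserving transitions; each edge carries a weight $c x^k$ for the appropriate wildcard-column multiplicity $c$. The transfer-matrix formula then presents $G^{(k)}_r(x)$ as a rational function with denominator $\det(I - x^k T_r)$ for an integer matrix $T_r$ depending on $r$ alone.

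With $G^{(k)}_r(x)$ in hand, one verifies the conjecture by the algebraic identity $1 - G^{(k)}_r(x) = D^{(k)}_r(x)/N^{(k)}_r(x)$, where $D^{(k)}_r$ and $N^{(k)}_r$ denote the conjectured denominator and numerator. The structural remark that $N^{(k)}_r(x)$ is the restriction of $D^{(k)}_r(x)$ to exponents of the form $jk$ is a consistency check: every length of a generator must be either $1$ or of the form $jk + 1$ (one extra column beyond the $k$-column blocks for the final $0/1$ closing), so $G^{(k)}_r(x)$ is supported on exponents $\equiv 1 \pmod{k}$, and clearing denominators in $N^{(k)}_r = D^{(k)}_r - N^{(k)}_r G^{(k)}_r$ forces $N^{(k)}_r$ to absorb precisely the $x^{jk}$ terms of $D^{(k)}_r$.

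The hard part will be the combinatorial classification of $\cg^{(k)}_r$ for $r = 6, 7$, where coefficients such as $-88$, $-311$, and $-84$ in the conjectured denominators encode intricate counts of linked block-transitions. A sensible order of attack is: first carry out the enumeration for $r = 4$, where $-7x^k$ and $-2x^{2k}$ suggest only a handful of active-block types and where one can hand-check the small-length values (e.g.\ there are exactly $14$ indecomposable length-$3$ tuples when $k = 2$, matching the expansion of the conjectured formula); then $r = 5$, whose asymmetric coefficients $-11, -8, -20, 10$ suggest a richer but still tractable transfer graph; and finally $r = 6, 7$ with computer algebra assistance to confirm that the transfer matrix has the characteristic polynomial predicted by $D^{(k)}_r(x)$. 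Exploiting the $S_r$-action on the rows and the bit-complementation symmetry should cut down the case analysis and may conceptually explain the simple parity pattern of exponents appearing in every $D^{(k)}_r$.
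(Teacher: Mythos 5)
First, note that the paper itself offers no proof of this statement: it is presented as a conjecture, supported only by \texttt{gfun} guesses, and the paper explicitly remarks that ``finding the free generators of $\cm^{(k)}(r)$ seems complicated for $r\geq 3$, and we have not tried to do so.'' Your framework is the right one and matches the route the paper suggests: $v_r^{(k)}(n,1)$ does count length-$n$ elements of the free monoid $\cm^{(k)}(r)$, so $J_r^{(k)}(1,x)=1/(1-G_r^{(k)}(x))$ with $G_r^{(k)}$ the length generating function of the free generators, and your hand check of the $14$ indecomposable length-$3$ quadruples for $r=4$, $k=2$ is correct. But the proposal is a research plan, not a proof: the entire content of the conjecture lives in the explicit classification of $\cg_r^{(k)}$, and you defer exactly that step (``I would encode generators as walks on a finite directed graph\dots''). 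Nothing in the write-up establishes that such a finite transfer graph exists, that its edge weights are supported on $x^k$, or that the resulting matrix $T_r$ is independent of $k$ --- the last point being precisely the surprising $k$-independence asserted by Conjecture~\ref{conj:drx}, which cannot be assumed.

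The specific structural claim that needs justification is that every generator decomposes into ``active'' $k$-column blocks separated by single wildcard columns on which all $r$ rows agree. This is how the $r=2$ generators of equation~\eqref{eq:gens} look, but for $r\geq 3$ different subsets of the $r$ rows can be at different depths of the rebalancing $F^{(k)}_{i+k-1}+\cdots+F^{(k)}_{i+2k-2}=F^{(k)}_{i+2k-1}$ simultaneously, and it is not evident that indecomposable elements retain this clean block structure, nor that their lengths are all $\equiv 1\pmod k$ (your argument for this is circular, since it reads the support of $G_r^{(k)}$ off the conjectured answer). Until the generators are actually enumerated --- producing the coefficients $-7,-2$ for $r=4$, $-11,-8,-20,10$ for $r=5$, and so on, uniformly in $k$ --- the statement remains open. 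As a plan of attack it is sensible and consistent with the paper's own setup; as a proof it has a gap exactly where the paper says the difficulty lies.
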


Theorem~\ref{thm1t} and Conjectures~\ref{conj:v3k} and \ref{conj:jrkx}
suggest the following conjecture. I am grateful to Doron Zeilberger
for pointing out that the original form of this conjecture was
incorrect. 

\begin{conj} \label{conj:drx}
For $r\geq 2$ there is an integer $m\geq 1$ (depending on $r$)
for which $J_r^{(k)}(t,x)$ has the form
  $$ J_r^{(k)}(t,x) =\frac{1+a_2(t,t^{k-1})x^k+a_4(t,t^{k-1})x^{2k}+\cdots
       +a_{2m}(t,t^{k-1})x^{mk}}{D_r^{(k)}(t,x)}, $$ 
where
   \beas D_r^{(k)}(t,x) & = &
  1+a_1(t,t^{k-1})x+a_2(t,t^{k-1})x^k+a_3(t,t^{k-1})x^{k+1}\\ & & +
  a_4(t,t^{k-1})x^{2k} +a_5(t,t^{k-1})x^{2k+1}\\ & &  
   + \cdots + a_{2m}(t,t^{k-1})x^{mk}+a_{2m+1}(t,t^{k-1})x^{mk+1},
   \eeas 
and where $a_i(t,u)$ is a polynomial in $t$ and $u$ (depending on $r$
but independent from $k$) such that $a_{2m+1}(t,t^{k-1})\neq 0$, and
possibly even $a_i(t,t^{k-1})\neq 0$ for all $0\leq i\leq
2m+1$. Moreover (in order to account for the odd denominator degrees
in equation~\eqref{eq:data}), the largest index $j$ for which
$a_j(1,1)\neq 0$ is odd. 
\end{conj}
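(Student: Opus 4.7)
The plan is to generalize the monoid-theoretic proof of Theorem~\ref{thm:vk2n} to arbitrary $r\geq 2$. The starting point is the observation, already made in the paper, that $\cm^{(k)}(r)$ is a free monoid, so if $G_r^{(k)}(t,x)$ denotes the generating function of its free generators weighted by length (in $x$) and by total number of 1's across the $r$ rows (in $t$), then
$$J_r^{(k)}(t,x)=\frac{1}{1-G_r^{(k)}(t,x)}.$$
The conjecture is therefore equivalent to showing that $G_r^{(k)}(t,x)$ has the form $x\cdot \tilde P(t,x^k)/D(t,x^k)$, where $D(t,y)=1+a_2(t)y+a_4(t)y^2+\cdots+a_{2m}(t)y^m$ and $\tilde P(t,y)$ is a polynomial of degree $m$ in $y$; an elementary rationalization of $1-G_r^{(k)}$ then produces the predicted numerator/denominator shape, with the numerator equal to $D(t,x^k)$ because that is precisely the factor cleared in the process.

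First I would prove a length lemma: every free generator of $\cm^{(k)}(r)$ has length either $1$ or $jk+1$ for some $j\geq 1$. The intuition is that the only way to generate a new relation among the $F_{i+k-1}^{(k)}$ is via the recurrence $F_{n+1}^{(k)}=F_n^{(k)}+\cdots+F_{n-k+1}^{(k)}$, which consumes $k$ consecutive positions in exchange for one position further along, so any primitive nontrivial relation decomposes as $j$ applications of the recurrence (contributing $jk$ to the length) plus a single leftover carry. This is exactly how Lemma~\ref{lem:freegen} plays out for $r=2$, where the shapes in \eqref{eq:gens} have lengths $(j+1)k+1$. I would try to formalize this with a canonical-form reduction on $r$-tuples of binary words, pushing any ``internal'' occurrence of the local pattern $10^{k-1}\leftrightarrow 0^{k-1}1$ to the right until the word decomposes.

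Next I would parametrize the free generators by a finite set of ``types'' (depending only on $r$) together with an arbitrary binary ``$*$-pattern'' of length $j$, each $*$-column being independently chosen subject to a linear constraint across the $r$ rows that is determined by the type. Summing over $j$ should give
$$G_r^{(k)}(t,x)=c_0(t)\,x+\sum_{\text{types }\tau} c_\tau(t)\,x^{k+1}\sum_{j\geq 0}\bigl(e_\tau(t)\,x^k\bigr)^{j},$$
which collapses to a rational function in $x$ whose denominator is a polynomial in $x^k$; the integer $m$ in the conjecture is then the degree of that polynomial. The claim about the largest index $j$ with $a_j(1)\neq 0$ being odd would amount to exhibiting at least one type that contributes a generator of maximal length $mk+1$ (necessarily an odd offset from $mk$) and checking that its count at $t=1$ is nonzero.

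The hardest step will be the enumeration and parametrization of the free generators of $\cm^{(k)}(r)$ for $r\geq 3$, which the paper explicitly flags as complicated. The number of types grows with $r$, reflected by the growing denominator degrees in \eqref{eq:data}, and verifying that \emph{every} free generator has length of the form $jk+1$ seems to require a careful induction: given a pair of equal-length $r$-tuples summing to the same value, one must argue that some prefix already lies in a lower-complexity generator. A secondary difficulty is ensuring that $k$ enters only through $x^k$ uniformly in $k$; I expect this to follow from a length-preserving bijection between the free generators for different $k$, reading the $k$-dependent padding in \eqref{eq:gens} as a combinatorial scaffolding rather than genuinely new data.
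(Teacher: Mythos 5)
You should first note that the statement you are addressing appears in the paper only as a conjecture: the paper offers no proof, and explicitly remarks that finding the free generators of $\cm^{(k)}(r)$ ``seems complicated for $r\geq 3$, and we have not tried to do so.'' Your proposal follows exactly the route the paper itself carries out for $r=2$ (Lemma~\ref{lem:freegen} and Theorem~\ref{thm:vk2n}) and implicitly suggests for general $r$: since $\cm^{(k)}(r)$ is free, $J_r^{(k)}(t,x)=1/(1-G_r^{(k)}(t,x))$ where $G_r^{(k)}$ enumerates the free generators, and the conjectured shape would follow from a structure theorem for those generators. The framing is right, but nothing new is established: the two claims that carry all the weight --- (i) every free generator has length $1$ or $jk+1$, and (ii) the generators of length $jk+1$ fall into finitely many $r$-dependent ``types,'' each carrying $j$ independent $*$-columns, so that $G_r^{(k)}$ is $x$ times a rational function of $x^k$ --- are precisely the missing content, and you support them only with the heuristic that relations among the $F^{(k)}_i$ arise from single applications of the recurrence. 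For $r\geq 3$ the rows can apply the recurrence in interleaved, mutually incompatible ways; it is not clear that an indecomposable relation still has length $\equiv 1 \pmod{k}$, nor that the $*$-columns decouple into a geometric series. That is the step one would actually have to prove, and your proposal defers it.

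Two further points. First, even granting (i) and (ii), your claim that the numerator of $J_r^{(k)}$ is ``precisely the factor cleared'' needs care: distinct types contribute geometric series with distinct ratios $e_\tau(t)x^k$, so the cleared factor is a product over types, and you must check that this product reproduces exactly the terms of $D_r^{(k)}$ with exponents $0,k,\dots,mk$ without interference from the $x^{jk+1}$ terms (harmless for $k\geq 2$, but it should be said). Second, the conjecture asserts that the $a_i(t)$ are independent of $k$, whereas the paper's own $r=2$ computation gives $a_2(t)=-t^{k-1}(1+t^2)$, whose $t$-exponent genuinely depends on $k$ (the dependence disappears only at $t=1$). Your proposed length-preserving bijection between free generators for different $k$ cannot preserve the $t$-weights --- the $k$-dependent padding carries $1$'s --- so at best your program would establish the shape of $J_r^{(k)}$ in $x$ and the assertions at $t=1$, not the literal $k$-independence of the $a_i(t)$; that clause needs either a reformulation or a separate argument.
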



What other sequences satisfying linear recurrences with constant
coefficients have interesting behavior related to this paper?
We were unable to find any further recurrences with ``nice''
behavior. For instance, gfun fails to find rational generating
functions (using the values for $0\leq n\leq 40$) for the sum of the
squares of the coefficients of 
$\prod_{i=1}^n\left(1+x^{f_{i+2}}\right)$, when either $f_{i+1} =
f_i+f_{i-2}$ or $f_{i+1}=f_{i-1}+f_{i-2}$, with initial conditions
$f_1=f_2=f_3=1$. Zeilberger, however, is much more adept at
calculations, and he informs me (private communication, 24 March
2021), that for $f_{i+1}=f_i+f_{i-2}$, the sum of the squares of the
coefficients of $\prod_{i=1}^n (1+x^{f_{i+1}})$ has a generating
function $P(x)/Q(x)$ where $\deg P(x)=24$ and $\deg Q(x)=25$. For the
sum of the cubes, the denominator has degree 88. Similarly, for
$f_{i+1} = f_{i-1}+f_{i-2}$ the denominator of the generating function
for sum of the squares has degree 73. 
  
Note that in these two cases, the unique real zeros $\psi$ of the
corresponding characteristic polynomials $x^3-x^2-1$ and $x^3-x-1$ are
PV numbers, i.e., they are real algebraic integers greater than 1 all
of whose conjugates are less than 1 in absolute value. Similarly the
unique positive real zeros of $x^k-x^{k-1}-x^{k-2}-\cdots-x-1$, $k\geq
2$, are PV numbers. Thus Zeilberger \cite[p.~14]{zeil} conjectures
that his algorithms for computing $\sum u_\alpha(n)x^n$, correponding
to a $C$-finite sequence, terminates for all
$\alpha$ if and only if the the largest zero of the characteristic
polynomial of the recurrence is a PV number. We can make the somewhat
stronger conjecture that this condition on the recurrence is necessary
and sufficient for $\sum u_\alpha(n)x^n$ to be a rational function for
all $\alpha$. 

\section{Congruence properties}
For $0\leq a<m$, let $g_{m,a}(n)$ denote the number of coefficients of
$S_n(x)$ (defined by equation~\eqref{eq:stpr}) that are congruent to
$a$ modulo $m$. Reznick \cite{reznick} showed that the generating
function
  $$ G_{m,a}(x) = \sum_{n\geq 0} g_{m,a}(n)x^n $$
is rational. See also \cite[pp.~28--37]{rs:rut}, where some open
questions are on page 32. In particular, the denominator of
$G_{m,a}(x)$ has quite a bit of factorization that remains
unexplained. (For some small progress related to the denominator
factorization, see Bogdanov \cite{bog}.) For the proof that
$G_{m,a}(x)$ is rational, it is necessary to introduce auxiliary
generating functions $G_{m,a,b}(x)= \sum_{n\geq 0} g_{m,a,b}(n)x^n$,
where $g_{m,a,b}(n)$ is equal to the number of integers $0\leq k\leq
\deg S_n(x)$ for which $\anbc nk\equiv a\modd{m}$ and
$\anbc{n}{k+1}\equiv b \modd{m}$. 

We can do something analogous for the Fibonacci triangle. For $0\leq
a<m$, let $h_{m,a}(n)$ denote the number of coefficients of $I_n(x)$
(defined by equation~\eqref{eq:indef}) that are congruent to $a$ modulo
$m$. Define
  $$ H_{m,a}(x) = \sum_{n\geq 0} h_{m,a}(n)x^n. $$
The proof sketched in \cite{rs:rut} that $G_{m,a}(x)$ is rational
carries over, \emph{mutatis mutandis}, to $H_{m,a}(x)$. As in the
proof for $G_{m,a}(x)$, we need to introduce some auxiliary
generating functions that take into account consecutive coefficients 
of $I_n(x)$. However, we need also specify whether these coefficients
are the beginning, middle, or end of a string (as defined in
Section~\ref{sec:fp}). Thus we will have numbers like
$g^{(3,1)}_{m,a,b}(n)$ which count the number of integers $0\leq
k\leq \deg I_n(x)$ for which $\brbc nk$ ends a string and satisfies
$\brbc nk\equiv a\modd{m}$, while $\brbc{n}{k+1}$ begins a string and
satisfies $\brbc{n}{k+1}\equiv b\modd{m}$. When these procedures are
carried out we obtain the following result.

\begin{thm} \label{thm:hmax}
  The generating function $H_{m,a}(x)$ is rational.
\end{thm}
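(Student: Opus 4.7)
My plan is to adapt the approach used in \cite{rs:rut} for $G_{m,a}(x)$, suitably enhanced to handle the grouping structure of the Fibonacci triangle $\mathcal{F}$. The underlying idea is that the construction rule for $\mathcal{F}$ is \emph{finite state}: each entry of row $n+1$ is either a copy of one row-$n$ entry or a sum of two consecutive row-$n$ entries, and the group-position type (start, middle, or end of a group) of each row-$n+1$ entry is determined by the group-position types of a bounded neighborhood of row-$n$ entries. Reducing modulo $m$ then lets this finite-state mechanism operate on the finite alphabet of residues.

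First I would introduce a finite family of auxiliary counting functions. For position-type labels $\tau_1,\tau_2\in\{1,2,3\}$ (``first,'' ``middle,'' ``last'' in the group, as in the definitions of $\brbc{n}{k}_1,\brbc{n}{k}_2,\brbc{n}{k}_3$ of Section~\ref{sec:fp}) and residues $a,b\in\{0,1,\dots,m-1\}$, define
\[
 h^{(\tau_1,\tau_2)}_{m,a,b}(n) \;=\; \#\Bigl\{k\st \brbc{n}{k}\text{ has type }\tau_1,\ \brbc{n}{k+1}\text{ has type }\tau_2,\ \brbc{n}{k}\equiv a,\ \brbc{n}{k+1}\equiv b\ \modd{m}\Bigr\},
\]
with analogous triple-indexed counters $h^{(\tau_1,\tau_2,\tau_3)}_{m,a,b,c}(n)$ if needed to close the system. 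Using the recurrence for $\mathcal{F}$ in Section~\ref{sec:fp}, I would verify that each $h^{(\tau_1,\tau_2)}_{m,a,b}(n+1)$ is an integer linear combination of the auxiliary counters at level~$n$: an entry of row $n+1$ that is a copy inherits its residue from the parent, while an entry of the form $a_j+a_{j+1}$ has residue $a+b\ \modd{m}$, and in either case its new position type is read off from the local group structure at level $n$. The first and last entries of each row contribute at most a bounded boundary correction, which does not affect rationality.

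Having closed the linear recurrence, the standard transfer-matrix argument used in Section~\ref{sec:fp} (and in \cite[\S 2]{stern}) yields that each auxiliary generating function $\sum_{n\ge 0} h^{(\tau_1,\tau_2)}_{m,a,b}(n)\,x^n$ is rational. Since
\[
 h_{m,a}(n) \;=\; \sum_{\tau\in\{1,2,3\}} h^{(\tau)}_{m,a}(n),
\]
where the single-entry counters $h^{(\tau)}_{m,a}(n)$ are obtained from the pair counters by summing out the second coordinate and renormalizing (each index $k$ is picked up in an appropriate, constant, number of pair counters), $H_{m,a}(x)$ is a finite $\zz$-linear combination of rational functions, hence rational.

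The main obstacle is not conceptual but bookkeeping: writing out the full recurrence matrix requires a careful case analysis over all configurations of consecutive position types (end–start, start–middle, middle–end, etc.) to confirm that pair data at level $n$ really do suffice, or, failing that, that a finite enrichment to triples does. This is exactly the \emph{mutatis mutandis} the author refers to, paralleling the auxiliary functions $g_{m,a,b}(n)$ introduced for $G_{m,a}(x)$ but with the extra position-type index forced on us by the tripartite structure of groups in $\mathcal{F}$. Once the system is written down, rationality follows automatically from the finite-dimensionality of the state space.
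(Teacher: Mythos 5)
Your proposal matches the paper's own (sketched) argument: the paper likewise introduces auxiliary counters such as $g^{(3,1)}_{m,a,b}(n)$, tracking consecutive coefficients by residue mod $m$ together with their begin/middle/end position within a group, and then closes a finite linear system exactly as in the transfer-matrix computation of Section~\ref{sec:fp} and in \cite{rs:rut}. This is essentially the same approach, carried out correctly.
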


Naturally we would like to say more about $H_{m,a}(x)$ than just its
rationality. Here are some values suggested by gfun. None have been
proved. 

\beas H_{2,0}(x) & = & \frac{x^3(1-2x^2)}
      {(1-x)(1-x-x^2)(1-2x+2x^2-2x^3)}\\
      H_{2,1}(x) & = & \frac{1+2x^2}{1-2x+2x^2-2x^3}\\
      H_{3,0}(x) & = & \frac{2x^5(1-2x^2)}
       {(1-x)(1-x-x^2)(1-2x+2x^2-3x^3+4x^4-4x^5)}\\
       H_{3,1}(x) & = & \frac{1-2x+4x^2-6x^3+8x^4-10x^5+8x^6-6x^7}
          {(1-x)(1-x+x^2)(1-2x+2x^2-3x^3+4x^4-4x^5)}\\
      H_{3,2}(x) & = & \frac{x^3(1+2x^4)}
      {(1-x)(1-x+x^2)(1-2x+2x^2-3x^3+4x^4-4x^5)}\\
      H_{4,0}(x) & = & \frac{x^6(1-2x^2)(1-3x^2+4x^3-4x^4)}
      {(1-x)(1-x-x^2)(1-x^2+2x^4)(1-2x+2x^2-2x^3)^2}\\
        H_{4,1}(x) & = & \frac{1-2x+5x^2-8x^3+10x^4-12x^5+8x^6-6x^7}
        {(1-x)(1-2x+2x^2-2x^3)(1-x+2x^2-2x^3+2x^4)}\\
        H_{4,2}(x) & = & \frac{x^3(1+x^2)(1-2x^2)}
        {(1-x^2+2x^4)(1-2x+2x^2-2x^3)^2}\\
        H_{4,3}(x) & = & \frac{2x^5(1+x^2)}
         {(1-x)(1-2x+2x^2-2x^3)(1-x+2x^2-2x^3+2x^4)}
      \eeas
Note that just as for $G_{m,a}(x)$, there is a lot of denominator
factorization. Moreover, some of the numerators of $H_{m,a}(x)$ have
only two terms, in analogy to some numerators of $G_{m,a}(x)$ having
just one term.

We can try to extend Theorem~\ref{thm:hmax} to
  $$ I_n^{(k)}(x)=\prod_{i=1}^n \left( 1+x^{F_{i+k-1}^{(k)}}\right). $$
For $0\leq a<m$, let $h_{m,a}^{(k)}(n)$ denote the number of
coefficients of $I_n^{(k)}(x)$ that are congruent to $a$ modulo
$m$. Define
  $$ H_{m,a}^{(k)}(x) = \sum_{n\geq 0} h_{m,a}^{(k)}(n)x^n. $$

\begin{conj}
The generating function $H_{m,a}^{(k)}(x)$ is rational.
\end{conj}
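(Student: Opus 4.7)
My plan is to follow the strategy already indicated in the paper for Theorem~\ref{thm:hmax}, adapting it from the Fibonacci ($k=2$) setting to the general $k$-bonacci setting. The key structural input is the free monoid $\cm^{(k)}$ together with its set of free generators $\cg^{(k)}$ established in Lemma~\ref{lem:freegen}. This lemma plays the role for $k$-bonacci numbers that the $001 \leftrightarrow 110$ reduction of Lemma~\ref{lem1}(b) plays for ordinary Fibonacci numbers: it says that two binary strings represent the same $k$-bonacci sum iff one can be obtained from the other by the substitutions $0^k 1 \leftrightarrow 1^k 0$. Consequently, the coefficient $c_n(p)$ of $x^p$ in $I_n^{(k)}(x)$ is the number of binary strings of length $n$ whose $k$-bonacci value equals $p$, and consecutive coefficients are organized into ``groups'' of predictable shapes dictated by the generators in~\eqref{eq:01} and \eqref{eq:gens}.

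The plan is as follows. First, construct a $k$-bonacci analog $\mathcal{F}^{(k)}$ of the Fibonacci triangle whose $n$th row lists $c_n(0), c_n(1), c_n(2), \ldots$, partitioned into groups according to the equivalence classes of consecutive exponents under the $k$-bonacci reduction, and derive an explicit recurrence taking row $n$ to row $n+1$ analogous to the ``end of group / middle of group'' rules of Section~\ref{sec:fp}. The possible group shapes, and hence possible position types, will be in bijection with the free generators of $\cm^{(k)}$ and therefore form a finite set depending only on $k$. Second, for each residue $a \in \zz/m\zz$, each tuple of consecutive position types, and each tuple of residues of consecutive coefficients, introduce an auxiliary generating function $H_{m,a,\mathbf{a},\mathbf{\tau}}^{(k)}(x)$ that refines $H_{m,a}^{(k)}(x)$ by recording this local data. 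The recurrence rule updating row $n$ to row $n+1$ produces a finite system of linear relations among these auxiliary generating functions, with coefficients that are polynomials in $x$.

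Third, the rationality of $H_{m,a}^{(k)}(x)$ will follow from the usual transfer matrix argument: the system of equations can be written as $(I - xM) \mathbf{H}(x) = \mathbf{v}(x)$ for a finite matrix $M$ over $\zz$ and a vector of polynomials $\mathbf{v}(x)$, so each component of $\mathbf{H}(x)$, and in particular $H_{m,a}^{(k)}(x)$, is rational. This is exactly the ``\emph{mutatis mutandis}'' argument invoked in the proof of Theorem~\ref{thm:hmax}, and it transfers cleanly once the combinatorial bookkeeping is in place.

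The main obstacle will be the combinatorial bookkeeping itself: identifying the correct minimal notion of ``local state'' so that the update rule is genuinely local, and confirming that the number of states is finite. For $k=2$ the group sizes are only $2$ and $3$, so local states are small; for general $k$ the shapes in~\eqref{eq:gens} can be arbitrarily long, so one must be careful that ``position within a group'' is captured by finitely much data (e.g.\ position relative to the nearest $*$-column, together with the boundary decorations from~\eqref{eq:gens}) rather than absolute position. Once the position-type alphabet is shown to be finite and the row-to-row transition is shown to depend only on bounded windows of this alphabet together with residues mod~$m$, the finiteness of the state space, and hence rationality, follows from the same transfer-matrix argument used in \cite{rs:rut} and in the proof of Theorem~\ref{thm:hmax}.
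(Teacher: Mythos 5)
First, note that the statement you are proving is stated in the paper only as a conjecture: the paper gives no proof for general $k$, and even the $k=2$ case (Theorem~\ref{thm:hmax}) is only asserted to follow \emph{mutatis mutandis} from an argument sketched elsewhere. So the real question is whether your plan closes the gap that led the author to leave this open, and it does not: it is a research program with its central difficulty acknowledged but unresolved. The entire argument rests on the existence of a $k$-bonacci analogue of the Fibonacci triangle, i.e.\ a \emph{local} row-to-row rule, with a \emph{finite} alphabet of position types, describing how the coefficients of $I_{n+1}^{(k)}(x)=I_n^{(k)}(x)\bigl(1+x^{F^{(k)}_{n+k}}\bigr)$ are built from those of $I_n^{(k)}(x)$. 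You never construct this object; you only assert it exists and then flag the finiteness of the state space as ``the main obstacle,'' offering a speculative remedy (position relative to the nearest $*$-column). Without that construction there is no finite transfer matrix, and the rationality claim does not follow.

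There is also a more specific misidentification of the relevant structure. Lemma~\ref{lem:freegen} and the generators $\cg^{(k)}$ of \eqref{eq:gens} describe pairs of binary strings with \emph{equal} $k$-bonacci value; they control the multiplicities $c_n(p)$ and hence the power sums $v_r^{(k)}(n,t)$, which is why they suffice for Theorem~\ref{thm:vk2n}. The congruence problem instead requires understanding how \emph{consecutive} coefficients $c_n(p)$ and $c_n(p+1)$ are jointly produced, which in the $k=2$ case comes from a different piece of combinatorics: the grouping of consecutive exponents in the Fibonacci triangle, equivalently the structure of the poset $\fp\cong P_{23}$ and the Zeckendorf-type representations of consecutive integers. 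Your claim that ``the possible group shapes will be in bijection with the free generators of $\cm^{(k)}$'' conflates these two structures and is unjustified; the generators of $\cm^{(k)}$ say nothing directly about adjacency of exponents. To make your plan work you would need to (i) prove a $k$-bonacci analogue of Theorem~\ref{thm:hnfn}, i.e.\ exhibit the triangle with its grouping and verify it computes $I_n^{(k)}(x)$, and (ii) prove that the resulting position-type alphabet is finite and the transition rule depends only on bounded local data. Both steps are genuinely missing, and (ii) is exactly where the known argument stops generalizing for free, since for $k\geq 3$ the natural candidate groups need not have bounded size.
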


We have some scanty evidence for a ``congruence analogue'' of
Conjecture~\ref{conj:drx}. For $(m,a)=(2,1)$ we found enough evidence
to conjecture the following.

\begin{conj}
We have
  $$ H_{2,1}^{(k)}(x) = \frac{1+2x^k}{1-2x+2x^k-2x^{k+1}}. $$
\end{conj}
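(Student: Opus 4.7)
Adapt the transfer-matrix proof of Theorem~\ref{thm1} to the $k$-Fibonacci setting and to the parity statistic $h_{2,1}^{(k)}$. First, generalize the Fibonacci triangle $\mathcal{F}$ to a $k$-Fibonacci triangle $\mathcal{F}^{(k)}$ whose $n$th row lists the coefficients of $I_n^{(k)}(x)$ in increasing order of exponent, grouped by a deterministic local rule derived from a $k$-analogue of Lemma~\ref{lem1}(b) (the substitution $0^{k}1 \leftrightarrow 1^{k}0$ replacing $001\leftrightarrow 110$; for $k=2$ this recovers $\mathcal{F}$). Introduce parity counts $B_{j,\epsilon}(n)$, the number of row-$n$ entries at position $j$ of their group with parity $\epsilon\in\{0,1\}$, together with joint-parity counts at group boundaries. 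The grouping recurrence yields a linear system $v(n+1)=Mv(n)$ with $h_{2,1}^{(k)}(n) = \sum_{j}B_{j,1}(n)$, so $\sum_{n}h_{2,1}^{(k)}(n)x^{n}$ is rational with denominator dividing $\det(I-xM)$.

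A much cleaner route is available when $k$ is even. I would prove a $k$-Fibonacci generalization of Yufei Zhao's theorem \cite{zhao}: for $k$ even, all nonzero coefficients of $\prod_{i=1}^{n}(1-x^{F_{i+k-1}^{(k)}})$ equal $\pm 1$. The argument goes by induction on $n$, controlling overlaps that arise when multiplying by $(1-x^{F_{n+k-1}^{(k)}})$ via the free-monoid structure of $\cm^{(k)}$ (Lemma~\ref{lem:freegen}). Since $-1\equiv 1 \pmod{2}$, this polynomial and $I_n^{(k)}(x)$ have identical parity patterns, and the $\pm 1$ claim yields $h_{2,1}^{(k)}(n) = v_{2}^{(k)}(n,-1)$; substituting $t=-1$ into Theorem~\ref{thm:vk2n} (with $t^{k-1}=-1$ and $1+t^{2}=1+t^{4}=2$) then produces exactly the conjectured $\frac{1+2x^{k}}{1-2x+2x^{k}-2x^{k+1}}$. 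This shortcut fails for odd $k$: e.g., for $k=3$, $n=4$, the coefficient of $x^{9}$ in $\prod_{i=1}^{4}(1-x^{F_{i+2}^{(3)}})$ equals $-2$, so the transfer-matrix route is essential in the odd case.

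\textbf{Main obstacle.} The cancellation of spurious factors in $\det(I-xM)$. Already for $k=2$ in the proof of Theorem~\ref{thm1}, the $7\times 7$ matrix has characteristic polynomial $x^{2}(x+1)^{2}(x^{3}-2x^{2}-2x+2)$, of which only the cubic factor is essential; the paper itself conjectures a ``simpler argument involving a $3\times 3$ matrix'' exists. For general $k$ one expects a $(k+1)$-dimensional invariant subspace carrying the characteristic polynomial $x^{k+1}-2x^{k}+2x-2$ (the reciprocal of the conjectured denominator); identifying this subspace combinatorially, uniformly in $k$, is the decisive insight still missing, and would simultaneously clarify the mysterious cancellation already present at $k=2$.
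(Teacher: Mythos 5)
First, note that the paper offers no proof of this statement: it is presented explicitly as a conjecture supported only by gfun evidence, so there is no argument of the author's to compare yours against. Your proposal therefore has to stand on its own, and as it stands it does not close the conjecture. The genuinely valuable part is the even-$k$ reduction: your arithmetic is correct that substituting $t=-1$ into Theorem~\ref{thm:vk2n} gives $1+2x^k$ over $1-2x+2x^k-2x^{k+1}$ exactly when $t^{k-1}=-1$, and that the identification $h_{2,1}^{(k)}(n)=v_2^{(k)}(n,-1)$ holds precisely when all nonzero coefficients of $\prod_{i=1}^n\bigl(1-x^{F_{i+k-1}^{(k)}}\bigr)$ are $\pm 1$. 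For $k=2$ this is Zhao's theorem \cite{zhao}, so your route actually \emph{proves} the $k=2$ case of the conjecture (equivalently, it proves the gfun-suggested formula for $H_{2,1}(x)$, which the paper lists as unproved). But for even $k\geq 4$ the Zhao-type statement is itself an unproved conjecture in your write-up; ``controlling overlaps via the free-monoid structure'' is a plausible strategy (one could also try to show $v_4^{(k)}(n,-1)=v_2^{(k)}(n,-1)$ by the monoid method, as the paper does for $k=2$), but no inductive step is actually carried out, and the overlap analysis is exactly where such arguments fail or succeed.

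The larger gap is the odd case, which your own (correct) counterexample shows cannot be reached by the $t=-1$ substitution: for $k=3$, $n=4$ one has $v_2^{(3)}(4,-1)=18$ while $h_{2,1}^{(3)}(4)=14$, so the two quantities genuinely diverge. For odd $k$ you fall back on the transfer-matrix framework, but there you supply only the shape of the argument: the $k$-analogue of the grouping rule, the closure of the parity system into finitely many statistics, the computation of the matrix $M$, and above all the identification of the $(k+1)$-dimensional invariant subspace carrying $x^{k+1}-2x^k+2x-2$ are all left open, and you say so yourself. Setting up the finite system would at best reprove rationality of $H_{2,1}^{(k)}(x)$ (itself only conjectured in the paper for $k\geq 3$); extracting the specific closed form uniformly in $k$ is the entire content of the conjecture and is not addressed. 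So the proposal should be read as a promising reduction for even $k$ plus a research plan, not a proof.
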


For $(m,a)=(3,1)$ gfun suggests the following:

 \beas H_{3,1}^{(2)}(x) & = & \frac{1-2x+4x^2-6x^3+8x^4-10x^5+8x^6-6x^7}
       {1-4x+8x^2-12x^3+16x^4-20x^5+19x^6-12x^7+4x^8}\\[.5em]
   H_{3,1}^{(3)}(x) & = &
   \frac{1-2x+4x^3-6x^4+8x^6-10x^7+8x^9-6x^{10}}
        {D(x)}, \eeas
where
      $$ D(x) = 1-4x+4x^2+4x^3-12x^4+8x^5+8x^6-20x^7+11x^8 $$
      $$ \hspace{-2cm} +8x^9 -12x^{10}+4x^{11}. $$
The connection between the two numerators is obvious. Note the
denominator coefficients of $H_{3,1}^{(2)}(x)$ are obtained from those
of $H_{3,1}^{(3)}(x)$ by adding the coefficients of the pairs
$(x^2,x^3)$, $(x^5,x^6)$ and $(x^8,x^9)$, keeping the other
coefficients unchanged. It shouldn't be difficult to come up with
more general conjectures. Even better, of course, would be some
theorems! 

\textsc{Acknowledgment.} I am grateful to two anonymous referees for
helpful comments.

\end{document}